\documentclass[12pt,a4paper,reqno]{amsart}
\usepackage{lineno}
\allowdisplaybreaks
\usepackage{amsmath}
\usepackage{amssymb}
\usepackage{amsfonts}
\usepackage{amsthm,latexsym,enumerate,url,cases}
\numberwithin{equation}{section}
\usepackage{mathrsfs}
\usepackage{array,longtable}

\usepackage{xcolor}
\definecolor{cite}{rgb}{0.00,0.60,1.00}
\definecolor{url}{rgb}{1.00,0.10,0.80}
\definecolor{link}{rgb}{0.00,0.00,1.00}
\usepackage[colorlinks,linkcolor=link,urlcolor=url,citecolor=cite,pagebackref,breaklinks]{hyperref}

\hypersetup{
	pdfstartpage=1,
	pdfstartview=FitH}
\def\Z{{\mathbb Z}}

\newcommand{\card}[1]{\left|#1\right|}
\newcommand{\floor}[1]{\left\lfloor #1\right\rfloor}

\theoremstyle{plain}
\newtheorem{theorem}{Theorem}

\newtheorem{lemma}{Lemma}
\newtheorem{problem}{Problem}
\newtheorem{corollary}{Corollary}
\newtheorem{proposition}{Proposition}
\newtheorem{conjecture}{Conjecture}
\theoremstyle{definition}

\newtheorem{remark}{Remark}

\usepackage{etoolbox}
\makeatletter
\patchcmd{\@settitle}{\uppercasenonmath\@title}{}{}{}
\patchcmd{\@setauthors}{\MakeUppercase}{}{}{}
\patchcmd{\section}{\scshape}{}{}{}
\makeatother

\begin{document}

\title
[Adjacent comparison bounds and extremal sets for Ruzsa numbers]
{Adjacent comparison bounds and extremal sets for Ruzsa numbers}

\author
[Y. Ding, H. Li, J. Li, N. Wei and X. Zhao] {Yuchen Ding, Huixi Li, Junfeng Li, Wei Niu and Xiamiao Zhao}

\address{(Yuchen Ding$^{1,2}$) $^1$School of Mathematics,  Yangzhou University, Yangzhou 225002, People's Republic of China}
\address{$^2$HUN-REN Alfr\'ed R\'enyi Institute of Mathematics, Budapest, Pf. 127, H-1364 Hungary}
\email{ycding@yzu.edu.cn}

\address{(Huixi Li) School of Mathematical Sciences and LPMC, Nankai University, Tianjin
300071, People's Republic of China}
\email{lihuixi@nankai.edu.cn}

\address{(Junfeng Li) School of Mathematical Sciences and LPMC, Nankai University, Tianjin
300071, People's Republic of China}
\email{2120250059@mail.nankai.edu.cn}

\address{(Wei Niu$^{1,2}$) $^1$School of Mathematics and Statistics, Xi'an Jiaotong University, 28 Xianning
West Road, Beilin District, Xi'an, Shaanxi, China}
\address{$^2$Department of Computer Science and Information Theory,
Budapest University of Technology and Economics,
M\H{u}egyetem rakpart 3,
H-1111 Budapest,
Hungary}
\email{wei.niu@stu.xjtu.edu.cn}

\address{(Xiamiao Zhao) Department of Mathematical Sciences, Tsinghua University, Beijing, People's Republic of China}
\email{zxm23@mails.tsinghua.edu.cn}

\keywords{Additive bases, Ruzsa number, Erd\H os-Tur\'an conjecture, representation functions, prime number theorem}
\subjclass[2020]{11B13, 11B34}

\begin{abstract}
Let $m$ be a positive integer and $\mathbb{Z}_m$ the residue class ring modulo $m$. The Ruzsa number $R_m$ is defined to be the least integer $r$ such that there is a subset $\mathcal{A}$ of $\mathbb{Z}_m$ satisfying 
$
1\le \sigma_{\mathcal{A}}(n)\le r
$
for any $n\in \mathbb{Z}_m$, where 
$$
\sigma_{\mathcal{A}}(n)
=\#\big\{(a,a')\in\mathcal A^2:
 a+a'\equiv n\pmod{m}\big\}.
$$
Motivated by a 2024 conjecture of Ding and Zhao, we prove 
$
| R_{m+1}-R_m|\le 144.
$
Let $\mathcal{A}$ be a subset of $\mathbb{Z}_m$ satisfying 
$
1\le \sigma_{\mathcal{A}}(n)\le R_m
$
for any $n\in \mathbb{Z}_m$. We also give nontrivial bounds for the size of $\mathcal{A}$. 
Additionally, we provide exact values of $R_m$ for all $m\le 100$, which substantially extends the table of values given by S\'andor and Yang in 2017. Finally, we pose several related problems and prove some partial results.
\end{abstract}
\maketitle

\section{Introduction}
Let $m$ be a positive integer and $\mathbb{Z}_m$ the residue class ring modulo $m$. For any 
$n\in \mathbb{Z}_m$ and $\mathcal{A}\subset \mathbb{Z}_m$, let
$
\sigma_{\mathcal{A}}(n)
=\#\big\{(a,a')\in\mathcal A^2:
 a+a'\equiv n\pmod{m}\big\}.
$
The Ruzsa number $R_m$ is defined to be the least integer $r$ such that there is a subset $\mathcal{A}$ of 
$\mathbb{Z}_m$ satisfying 
$
1\le \sigma_{\mathcal{A}}(n)\le r ~ (\forall~n\in \mathbb{Z}_m).
$
Motivated by the Erd\H os-Tur\'an conjecture on additive bases \cite{ET41}, Ruzsa \cite{Ru90} proved that there is an absolute constant $C>0$ such that $R_m\le C$ for all positive integers $m$. Later, Tang and Chen \cite{TC06} showed that $R_m\le 768$ provided that $m$ is sufficiently large. In a subsequent article \cite{TC07}, Tang and Chen proved that $R_m\le 5120$ for all positive integers $m$. In \cite{Ch08} Chen improved the upper bound, and proved $R_m\le 288$ for all positive integers $m$.
The proof of Chen's bound $R_m\le 288$ consists of two ingredients. First, using Ruzsa's constructions with refinements he proved $R_{2p^2}\le 48$, where $p$ is a prime number. Second, he established the following comparison lemma.

\begin{proposition}[Chen's comparison lemma]\label{prop:1}
Let $m_1$ and $m_2$ be two positive integers with $m_1<m_2<\frac{3}{2}m_1$. Then we have $R_{m_1}\le 6R_{m_2}$.
\end{proposition}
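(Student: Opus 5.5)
We would prove Proposition~\ref{prop:1} by reducing an optimal set modulo $m_2$ to a set modulo $m_1$, using one unshifted copy and one copy shifted by $d:=m_2-m_1$. The hypothesis $m_1<m_2<\frac32 m_1$ says exactly that $0<d<\frac12 m_1$.

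\textbf{Construction.} Write $r=R_{m_2}$ and fix $\mathcal A\subset\mathbb Z_{m_2}$ with $1\le\sigma_{\mathcal A}(n)\le r$ for all $n$. Identify $\mathcal A$ with a set of integers in $[0,m_2)$. Define the set of integers
$$
\mathcal B=\bigl(\mathcal A\cap[0,m_1)\bigr)\cup\bigl((\mathcal A\cap[d,m_2))-d\bigr)\subset[0,m_1),
$$
and regard $\mathcal B$ as a subset of $\mathbb Z_{m_1}$. The two pieces are the unshifted and the shifted copy; truncating them keeps every element of $\mathcal B$ inside $[0,m_1)$.

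\textbf{Lower bound $\sigma_{\mathcal B}\ge1$.} Take $n\in[0,m_1)$. Since $n$ is also a residue modulo $m_2$, there are $a,a'\in\mathcal A$ with $a+a'\in\{n,\,n+m_2\}$.
\begin{itemize}
\item If $a+a'=n$, then $a,a'\le n<m_1$. Both lie in the unshifted piece, so $a+a'\equiv n\pmod{m_1}$.
\item If $a+a'=n+m_2=n+m_1+d$, one summand, say $a$, is at least $m_2/2>d$. Then $a-d\in\mathcal B$.
\item For the other summand, either $a'<m_1$, so $a'\in\mathcal B$ unshifted and $(a-d)+a'=n+m_1\equiv n\pmod{m_1}$. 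Or $a'\ge m_1>d$, in which case $a'-d\in\mathcal B$ as well.
\item In that last sub-case we instead want $a+a'-2d\equiv n\pmod{m_1}$, which needs a little care. Alternatively we pair $a$ unshifted (if $a<m_1$) with $a'-d$.
\end{itemize}
The case check uses $d<m_1/2$ and the fact that $a,a'<m_2$. Checking it is routine but must be done carefully.

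\textbf{Upper bound.} Every ordered pair $(b,b')\in\mathcal B^2$ with $b+b'\equiv n\pmod{m_1}$ is charged to a choice of preimages $(a,a')\in\mathcal A^2$ together with a type (unshifted/unshifted, shifted/unshifted, unshifted/shifted, shifted/shifted). This charging is injective once a canonical preimage is chosen for elements lying in both pieces.

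Because $b,b'\in[0,m_1)$, the integer $b+b'$ takes only the two values $n$ and $n+m_1$. Hence for each type, $a+a'$ is one of two explicit integers, namely $n+jd$ or $n+m_1+jd$ with $j\in\{0,1,2\}$ the number of shifted entries. For each such integer $s$, the number of pairs with $a+a'=s$ is at most $\sigma_{\mathcal A}(s \bmod m_2)\le r$.

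This crude count gives $\sigma_{\mathcal B}(n)\le 8r$. Getting down to $6r$ is the main obstacle. We would attack it by exploiting the range constraints on the preimages:
\begin{itemize}
\item The unshifted entries are $<m_1$, and the shifted entries are $\ge d$ and $<m_2$.
\item Consequently, for the unshifted/unshifted type the value $a+a'=n$ and the value $a+a'=n+m_1$ cannot both be large. For the doubly shifted type, $a+a'=n+2d$ and $a+a'=n+m_2+d$ represent the residues $n+2d$ and $n+d$ modulo $m_2$.
\item These residues coincide with residues already counted in the mixed types. So each residue class of $\mathbb Z_{m_2}$ is hit by at most two of the types with constrained ranges, and its total contribution is bounded by a single $\sigma_{\mathcal A}$ value.
\item Concretely, we would group the $8$ (type, value) pairs by their residue modulo $m_2$. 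We expect only three residues to occur, $n$, $n+d$ and $n+2d$, each counted twice, one contribution for each ordered orientation. This would give $\sigma_{\mathcal B}(n)\le 6r$.
\end{itemize}
If this bookkeeping fails, the fallback is to adjust the truncation points of the two pieces so that overlapping contributions are excluded, which should be enough to yield the constant $6$.
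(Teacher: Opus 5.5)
\textbf{The covering step has a genuine gap.} The paper only quotes this lemma from Chen and gives no proof, so your argument has to stand on its own. Take $m_1-d\le n<m_1$ and a representation $a+a'=n+m_2$ with $a,a'\in[m_1,m_2)$, e.g.\ $n=m_1-1$, $a=m_1$, $a'=m_2-1$. Nothing prevents every representation of $n$ modulo $m_2$ from being of this kind.

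Because you truncated the unshifted copy at $m_1$, the only images of $a,a'$ in $\mathcal B$ are $a-d$ and $a'-d$. Their sum is $(a-d)+(a'-d)=n+m_1-d\equiv n-d\pmod{m_1}$. This is not a matter of ``a little care'': no pair built from $(a,a')$ represents $n$, and you must start from a different residue of $\mathbb Z_{m_2}$.

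The repair is as follows.
\begin{itemize}
\item For $n<m_1-d$ your argument works. Here $a+a'=n+m_2<2m_1$ forces one summand $a'<m_1$, and then $a>n+d\ge d$.
\item For $n\ge m_1-d$ (hence $n>d$), use a representation of $n+d$ instead. If $a+a'=n+d$, the larger summand $a$ satisfies $a\ge(n+d)/2>d$, so $a'<n<m_1$ and $(a-d)+a'=n$.
\item If instead $a+a'=n+d+m_2$, both summands exceed $d$, and $(a-d)+(a'-d)=n+m_1\equiv n\pmod{m_1}$.
\end{itemize}

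\textbf{The counting step is only conjectured, and the bookkeeping you predict is wrong.} Write $f(s)=\#\{(a,a')\in\mathcal A^2:a+a'=s\}$. Your eight terms are bounded by $f(n)$, $f(n+m_1)$, $f(n+d)$ twice, $f(n+m_2)$ twice, $f(n+2d)$ and $f(n+m_2+d)$. The unshifted wrap-around $n+m_1=(n-d)+m_2$ lies in a fourth residue, $n-d$. The residues $n$ and $n+d$ each receive three contributions, and these are not bounded by a single $\sigma_{\mathcal A}$-value.

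What does work is to split by integer sum and use $f(s)+f(s+m_2)=\sigma_{\mathcal A}(s)$ for $0\le s<m_2$, noting $n+d<m_2$:
\[
\sigma_{\mathcal B}(n)\le\bigl[f(n)+2f(n+m_2)\bigr]+\bigl[2f(n+d)+f(n+d+m_2)\bigr]+f(n+m_1)+f(n+2d)\le 2r+2r+r+r=6r.
\]
With these two repairs your truncated construction does prove the proposition. The constant $6$ comes from $2+2+1+1$ over four residues, not from three residues each counted twice.
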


Using $R_{2p^2}\le 48$ and his comparison lemma, Chen obtained the bound $R_m\le 288$ by the explicit prime number theorem. Motivated by Chen's comparison lemma, recently Ding and Zhao \cite{DZ24} proved the following refined comparison lemma.
\begin{proposition}[Ding-Zhao's comparison lemma]\label{prop:2}
Let $m_1$ and $m_2$ be two positive integers with $\frac{3}{2}m_1\le m_2<2m_1$. Then we have $R_{m_2}\le 4R_{m_1}$.
\end{proposition}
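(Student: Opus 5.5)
The plan is a direct construction: lift an extremal set for $m_1$ to two shifted copies in $\mathbb{Z}_{m_2}$. Put $d=m_2-m_1$, so the hypothesis reads $\frac12 m_1\le d<m_1$. Take $\mathcal A\subset\mathbb Z_{m_1}$ with $1\le\sigma_{\mathcal A}(n)\le R_{m_1}$ for all $n$. Identify $\mathcal A$ with a subset of $\{0,1,\dots,m_1-1\}$, and define
$$\mathcal B=\mathcal A\cup(\mathcal A+d)\subset\mathbb Z_{m_2}.$$
For integers $a,a'\in\mathcal A$ put $s=a+a'\in[0,2m_1-2]$. Each ordered pair $(a,a')$ gives rise to the four sums $s,\ s+d,\ s+d,\ s+2d$ in $\mathcal B+\mathcal B$, coming from the four choices of copy for each summand.

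\emph{Lower bound $\sigma_{\mathcal B}(k)\ge1$.} Let $0\le k<m_2$.

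If $k<m_1$, choose $(a,a')$ with $s\equiv k\pmod{m_1}$, so $s\in\{k,k+m_1\}$.
If $s=k$ we are done. If $s=k+m_1$, then $a+(a'+d)=k+m_2\equiv k\pmod{m_2}$.

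If $m_1\le k<m_2$, write $k=m_1+j$ with $0\le j<d$. Put $n=j+m_1-d\in[0,m_1)$ and choose $(a,a')$ with $s\equiv n\pmod{m_1}$.
If $s=n$, then $s+d=k$.
If $s=n+m_1$, then $s+2d=k+m_2\equiv k$.

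So every residue mod $m_2$ is represented. Overlaps between $\mathcal A$ and $\mathcal A+d$ only lower $\sigma_{\mathcal B}$, so they cause no harm in either direction.

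\emph{Upper bound $\sigma_{\mathcal B}(k)\le 4R_{m_1}$.} Here I count triples $(a,a',e)$ with $e$ running through the multiset $\{0,d,d,2d\}$ and $s+e\equiv k\pmod{m_2}$. Since $s\le 2m_1-2<2m_2$, for each $e$ the admissible integers $s$ lie in $\{k-e,\ k-e+m_2\}\cap[0,2m_1-2]$; the value $k-e+2m_2$ is already too large. Reducing mod $m_1$ and using $m_2\equiv d$, these are the residues $k-e$ and $k-e+d$.

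The key point is that the number of pairs $(a,a')$ whose sum $s$ lies in a fixed residue class mod $m_1$ is at most $R_{m_1}$. So it suffices to show that, for each $k$, the admissible $(s,e)$ are confined to at most four such (class, copy) slots in total, not the naive eight. To do this I will split into the ranges $k<d$, $d\le k<m_1$, $m_1\le k<2d$, etc. In each range I will check which of the eight candidate integers $k-e$, $k-e+m_2$ actually lie in $[0,2m_1-2]$. The inequality $2d\ge m_1$ (equivalently $m_2\ge\frac32 m_1$) is exactly what should kill the extra values. For instance, for $e=2d$ with $k<2d$, the value $k-2d$ is negative. For $e=0$ with $k\ge 2m_1-m_2$, the value $k+m_2$ is out of range. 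In every range the bookkeeping should leave at most four surviving candidates, each bounded by $R_{m_1}$.

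I expect this last case analysis to be the main obstacle. The construction and the covering argument are routine, but the constant $4$ depends on exact range counting. If some range leaves five candidates, I would first try a different shift: $d'=m_2-m_1$ replaced by $\lfloor m_2/2\rfloor$, or a shift chosen so that $\mathcal A+d'$ sits in the top part of $[0,m_2)$. Failing that, I would try $\mathcal B=\mathcal A\cup(\mathcal A+d)$ with $\mathcal A$ first translated so that its sums avoid the wrap-around region.
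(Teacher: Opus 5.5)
\paragraph{Gap in the upper bound.} Your construction $\mathcal B=\mathcal A\cup(\mathcal A+d)$ is sound, and your covering argument is correct; it does not even use $m_2\ge\frac32 m_1$. The gap is in the upper bound. You leave it as ``should'', and the mechanism you propose for it is false.

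Write $F(t)=\#\{(a,a')\in\mathcal A^2:a+a'=t\}$ for integers $t$, so $F$ is supported on $[0,2m_1-2]$. Your triple count is
\[
\sigma_{\mathcal B}(k)\le F(k)+F(k+m_2)+2F(k-d)+2F(k+m_1)+F(k-2d)+F(k+m_1-d).
\]
For $0\le k\le m_1-d-2$, only $F(k-d)$ and $F(k-2d)$ are forced to vanish. So five (class, copy) slots survive: $F(k)$, $F(k+m_2)$, $F(k+m_1)$ twice, and $F(k+m_1-d)$. This range is nonempty whenever $m_2\le 2m_1-2$. For example, $m_1=10$, $m_2=16$, $k=0$ leaves $F(0),F(16),F(10),F(10),F(4)$, all with arguments in $[0,18]$. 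Bounding each slot by $R_{m_1}$ therefore gives only $5R_{m_1}$. Your fallback of changing the shift is aimed at the wrong problem.

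\paragraph{The missing observation.} Slots coming from \emph{different} copies can lie in the same residue class modulo $m_1$ at different integer sums, and then they share a single $R_{m_1}$. Indeed, for $0\le t<m_1$ one has $F(t)+F(t+m_1)\le\sigma_{\mathcal A}(t)\le R_{m_1}$. Hence $F(k)+2F(k+m_1)\le 2R_{m_1}$ for $0\le k<m_1$, and $2F(k-d)+F(k-d+m_1)\le 2R_{m_1}$ for $d\le k<m_2$.

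Now split into three ranges.

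For $0\le k<d$, the bound is $[F(k)+2F(k+m_1)]+F(k+m_2)+F(k+m_1-d)\le 4R_{m_1}$.

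For $d\le k<m_1$, the hypothesis $2d\ge m_1$ kills $F(k-2d)$, since $k<m_1\le 2d$. It also kills $F(k+m_2)$, since $k+m_2\ge m_1+2d\ge 2m_1$. This leaves $[F(k)+2F(k+m_1)]+[2F(k-d)+F(k-d+m_1)]\le 4R_{m_1}$.

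For $m_1\le k<m_2$, we have $F(k+m_1)=F(k+m_2)=0$. The bound is $F(k)+[2F(k-d)+F(k-d+m_1)]+F(k-2d)\le 4R_{m_1}$.

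With this grouping your argument proves $R_{m_2}\le 4R_{m_1}$. As you anticipated, $m_2\ge\frac32 m_1$ is used exactly in the middle range.

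The paper itself only quotes this proposition from Ding--Zhao without proof. The same residue-class grouping is what drives its proof of Lemma~\ref{lem:1}, where it yields $2\sigma_{\mathcal A}(n)+\sigma_{\mathcal A}(n-1)+\sigma_{\mathcal A}(n+1)$.
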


As a corollary, Ding and Zhao \cite{DZ24} further improved Chen's bound which asserts 
\begin{align}\label{eq-intro-1}
   R_m\le 192 
\end{align}
for all $m$. On the other hand, Chen \cite{Ch08} pointed out $R_m\ge 3$ if $m\neq 1,2,3$. Interestingly, S\'andor and Yang \cite{SY17} claimed the lower bound  $R_m\ge 6$ for $m\ge 36$. In addition, S\'andor and Yang gave a list of values of $R_m$ up to $m\le 35$.  Based on their numerical values, Ding and Zhao \cite{DZ24} made the following conjecture.

\begin{conjecture}[Ding-Zhao]\label{conj:1}
For any positive integer $m$, we have $\big|R_{m+1}-R_m\big|\le 1$.
\end{conjecture}

However, small inaccuracies crept into the calculations of S\'andor and Yang.  It can be computed that $R_{36}=6$ while $R_{37}=4$, thus giving a counterexample to 
Conjecture \ref{conj:1} for $m=36$. Nevertheless, the lower bound of S\'andor and Yang has no fundamental issues. 
We compute the exact values of $R_m$ up to $36\le m\le 100$, and find that $R_{37}=4$ and $R_{39}=5$ are the only two exceptions. Our computation is certificate-based. For $m\le 45$, lower bounds are verified by finite certificate verification; for $m>45$, Proposition \ref{proposition-Sandor-Yang}
gives $R_m\ge 6$, while the displayed sets give $R_m\le 6$.
We will give an extended list of the values of $R_m$ in the next section, and furthermore formulate the corrected eventual picture. Based on the numerical calculations,  we are confident of the following revised conjecture.

\begin{conjecture}[Ding-Zhao, revised]\label{conj:2}
For all sufficiently large $m$, we have 
$$
\big|R_{m+1}-R_m\big|\le 1.
$$
\end{conjecture}

Probably, Conjecture \ref{conj:1} is true except for $m=36$ and $37$.
One notes easily that the algebraic structures of $\mathbb{Z}_{m}$ and $\mathbb{Z}_{m+1}$ could be significantly different due to the differences between the number of prime factors of $m$ and $m+1$.
Conjecture \ref{conj:2} if true would be a little surprising since it provides an alternative example where combinatorial randomness eventually defeats the rigidity of algebraic structure.
At present, we are not able to attack Conjecture \ref{conj:2}. Clearly, the lower bound $R_m\ge 6$ and the upper bound $R_m\le 192$ indicate that for all sufficiently large $m$ we have
$$
\big|R_{m+1}-R_m\big|\le 192-6=186.
$$
In this note, we give an improvement of the above somewhat trivial bound.

\begin{theorem}\label{thm:1}
    For any positive integer $m$ we have $\big|R_{m+1}-R_m\big|\le 144$.
\end{theorem}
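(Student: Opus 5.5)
The plan is to bound the oscillation of $R_m$ by its range rather than by comparing $\mathbb{Z}_m$ and $\mathbb{Z}_{m+1}$ directly. Since $R_m\ge 1$ trivially and $R_m\ge 6$ for $m\ge 36$ (Proposition \ref{proposition-Sandor-Yang}), it suffices to prove
$$
R_m\le 144 \qquad\text{for every } m .
$$
Then $|R_{m+1}-R_m|\le 144-1$ for all $m$, and the bound is even better for $m\ge 36$. For small $m$, say $m\le 100$, the exact table in the next section already gives the claim. So the work is an improved uniform upper bound for large $m$.

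\textbf{Step 1 (base moduli).} I will use Chen's bound $R_{2p^2}\le 48$ for primes $p$. By the prime number theorem, or an explicit version such as Dusart's bounds, for every sufficiently large $x$ there is a prime in $[x,(1+\delta)x]$. Hence for any small fixed $\delta>0$ and every large $m$ there is a prime $p$ with
$$
m\le 2p^2\le (1+\delta)^2 m .
$$
The finitely many intermediate $m$ not covered by the explicit prime-gap range, or by the table, can be handled by Chen's or Ding--Zhao's explicit estimates.

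\textbf{Step 2 (sharper comparison for nearby moduli; the main obstacle).} I will prove a refined form of Chen's comparison lemma (Proposition \ref{prop:1}): if $m_1\le m_2\le (1+\delta)m_1$ with $\delta$ small, then
$$
R_{m_1}\le 3R_{m_2}.
$$
Applying this with $m_1=m$ and $m_2=2p^2$ gives $R_m\le 3\cdot 48=144$.

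For the construction, take $\mathcal A\subset\mathbb{Z}_{m_2}$ realizing $R_{m_2}$, with integer representatives in $[0,m_2)$. Reduce these representatives modulo $m_1$, and add a few translated copies of the short ``overhang'' part $\mathcal A\cap[m_1,m_2)$, of size comparable to $\delta m_2$, to cover the residues that the wrap-around at $m_2$ versus $m_1$ misses. Coverage follows as in Chen's proof: each target residue $n$ modulo $m_1$ is lifted to a target $n$ or $n+m_1$ in $[0,2m_2)$, and that target is represented modulo $m_2$.

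The expected obstacle is the counting of representations. Chen's factor $6$ comes from each residue modulo $m_1$ having up to several integer lifts in $[0,2m_2)$ together with the extra pieces. Getting down to $3$ requires showing that when $m_2/m_1$ is close to $1$, a given residue receives contributions from at most three ``sheets'' of $\mathbb{Z}_{m_2}$-representations. These are the two lifts $n$ and $n+m_1$, plus one sheet from the overhang, and the copies of the overhang must contribute at most one full sheet in total.

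I would first try to arrange the overhang copies so that their pairwise sums land in disjoint intervals. If exactly $3$ is not attainable, then $R_m\le c\cdot 48$ with a slightly larger $c$, together with $R_m\ge 6$ for $m\ge 36$, is the fallback, which needs $48c-6\le 144$. I would then aim to lower the base value $48$ instead, or combine the lemma with Ding--Zhao's Proposition \ref{prop:2} to pick more favourable base moduli.
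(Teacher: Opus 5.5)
There is a genuine gap, and it is the step you flagged yourself. Everything rests on Step 2, the claim $R_{m_1}\le 3R_{m_2}$ for $m_1\le m_2\le(1+\delta)m_1$, and you give no proof of it. This is not a technical refinement. Together with $R_{2p^2}\le 48$ it would give the uniform bound $R_m\le 144$, strictly better than the best bound $R_m\le 192$ available in the paper, so you are trying to prove something much stronger than the theorem.

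Your sketch does not support the constant $3$ either. Reducing representatives from $[0,m_2)$ modulo $m_1$ already produces integer sums in $[0,2m_2-2]$. So a small residue $n$ is hit by pairs with sums $n$, $n+m_1$ and $n+2m_1$ before any overhang copy is added. The translated overhang copies then create new ordered pairs in both coordinates. Even in the most favourable case $m_2=m_1+1$, the natural construction only gives $\sigma_{\mathcal B}(n)\le 2\sigma_{\mathcal A}(n)+\sigma_{\mathcal A}(n-1)+\sigma_{\mathcal A}(n+1)$, a factor $4$, and $4\cdot 48=192$ is just the known bound. Your fallback needs $c\le 150/48$, which is no easier. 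Separately, the finite range of $m$ between the table and the point where your explicit prime-gap statement applies cannot be handled by Chen's or Ding--Zhao's estimates, since those give $288$ and $192$, not $144$.

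The missing idea is that you do not need a uniform bound of $144$ at all; note that $144=\tfrac34\cdot 192$. It suffices to compare adjacent moduli within a factor $4$ in both directions: $R_{m+1}\le 4R_m$ and $R_m\le 4R_{m+1}$. The paper proves this as follows.

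\begin{itemize}
\item For $R_{m+1}\le 4R_m$, take an extremal $\mathcal A\subset[0,m-1]$ for $\mathbb Z_m$. Add the shifted copy $\mathcal A_1+1$, where $\mathcal A_1=\{a\in\mathcal A: a\ge\lfloor m/2\rfloor\}$.
\item For $R_m\le 4R_{m+1}$, take an extremal set in $[0,m]$ for $\mathbb Z_{m+1}$. Add $\mathcal A_1-1$, where $\mathcal A_1=\{a\ge m/2+1\}$.
\end{itemize}

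The shifted copy repairs exactly the wrap-around representations. It yields the factor-$4$ bound displayed above, with the residue $m$ (respectively $0$) checked separately. Then, writing $M=\max(R_m,R_{m+1})\le 192$, the smaller value is at least $M/4$, so $|R_{m+1}-R_m|\le \tfrac34 M\le 144$. This uses only the existing bound $192$, with no new base moduli, prime gaps or improved uniform bound.
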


For any positive integer $m$, let $\mathscr{A}_m$ be the set of all subsets $\mathcal{A}\subset [0, m-1]$ such that for any $n\in \mathbb{Z}_m$ we have
$
1\le \sigma_{\mathcal{A}}(n)\le R_m.
$
In \cite[Conjecture 3.4]{DZ24}, the authors asked for estimates for the cardinalities of the sets $\mathcal{A}\in \mathscr{A}_m$. Let $\mathcal{A}\in \mathscr{A}_m$ be a given set. Note that
\begin{align}\label{eq-sec-1-1}
 |\mathcal{A}|^2=\sum_{n\in \mathbb{Z}_m}\sigma_{\mathcal{A}}(n).   
\end{align}
Then, from (\ref{eq-sec-1-1}) and S\'andor-Yang \cite[Lemmas 2.2]{SY17} we know
$$
\Big(\sqrt{2m}-\frac{1}{2}\Big)^2\le |\mathcal{A}|^2\le mR_m.
$$

Our second theorem gives nontrivial bounds on the size of $|\mathcal{A}|$. 

\begin{theorem}\label{thm:2}
For any fixed $\varepsilon>0$, let $m$ be sufficiently large and $\mathcal{A}\in \mathscr{A}_m$ a given set. Then we have
    \begin{align*}
        \Bigg(\floor{\frac{R_m}{2}}+\frac{1}{2}-&\sqrt{\floor{\frac{R_m}{2}}^2-3\floor{\frac{R_m}{2}}+\frac{1}{4}}-\varepsilon\Bigg) m \\
        &\le  |\mathcal{A}|^2 \le \Bigg(\floor{\frac{R_m}{2}}+\frac{1}{2}+\sqrt{\floor{\frac{R_m}{2}}^2-3\floor{\frac{R_m}{2}}+\frac{1}{4}}+\varepsilon\Bigg) m.
    \end{align*}

\end{theorem}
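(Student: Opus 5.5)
The plan is to sandwich the additive energy $E(\mathcal A)=\sum_{n\in\mathbb Z_m}\sigma_{\mathcal A}(n)^2$ between a Fourier-analytic lower bound and an upper bound coming from the constraint $1\le\sigma_{\mathcal A}(n)\le R_m$. Together these will give a quadratic inequality for $x=|\mathcal A|^2/m$. Throughout, put $k=\floor{R_m/2}$, $N=|\mathcal A|$, and use $\sqrt{2m}-\tfrac12\le N\le\sqrt{R_m m}\le\sqrt{192m}$, so that $N\asymp\sqrt m$.

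\emph{Step 1: parity structure of $\sigma_{\mathcal A}$.} Swapping $(a,a')\mapsto(a',a)$ is an involution on the pairs counted by $\sigma_{\mathcal A}(n)$. Its fixed points are the $a\in\mathcal A$ with $2a\equiv n$, and their number $d(n)$ is at most $2$. Hence $\sigma_{\mathcal A}(n)\equiv d(n)\pmod 2$, and $d(n)\neq0$ for at most $N=O(\sqrt m)$ residues $n$. For the remaining $n$ (call them good), $\sigma_{\mathcal A}(n)$ is even and lies in $[1,R_m]$, so $\sigma_{\mathcal A}(n)\in\{2,4,\dots,2k\}$. This gives $(\sigma_{\mathcal A}(n)-2)(\sigma_{\mathcal A}(n)-2k)\le0$, that is,
\[
\sigma_{\mathcal A}(n)^2\le(2k+2)\sigma_{\mathcal A}(n)-4k .
\]
At the $O(\sqrt m)$ bad residues we only use $\sigma_{\mathcal A}(n)\le R_m\le192$, so each such residue changes the inequality by $O(1)$. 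Summing over $n$ and using (\ref{eq-sec-1-1}), we obtain
\[
E(\mathcal A)\le(2k+2)N^2-4km+O(\sqrt m).
\]

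\emph{Step 2: lower bound for the energy.} Let $\hat{\mathcal A}(\xi)=\sum_{a\in\mathcal A}e(a\xi/m)$. Then $E(\mathcal A)=\frac1m\sum_\xi|\hat{\mathcal A}(\xi)|^4$. The term $\xi=0$ contributes $N^4/m$. For the others, Parseval gives $\sum_{\xi\ne0}|\hat{\mathcal A}(\xi)|^2=mN-N^2$, and Cauchy--Schwarz then gives $\sum_{\xi\ne0}|\hat{\mathcal A}(\xi)|^4\ge(mN-N^2)^2/(m-1)$. Since $N=O(\sqrt m)$,
\[
E(\mathcal A)\ge\frac{N^4}{m}+N^2-O(\sqrt m).
\]
This refinement of the trivial bound $E\ge N^4/m$ is the key point. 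Without the extra $N^2$ term the argument only yields $x^2-(2k+2)x+4k\le0$, whose roots are in the wrong place.

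\emph{Step 3: the quadratic.} Combining Steps 1 and 2 and dividing by $m$ gives
\[
x^2-(2k+1)x+4k\le O(m^{-1/2}).
\]
The roots of $x^2-(2k+1)x+4k$ are $k+\frac12\pm\sqrt{k^2-3k+\frac14}$, since their sum is $2k+1$ and their product is $(k+\frac12)^2-(k^2-3k+\frac14)=4k$. For $m$ large, $k\ge3$ because $R_m\ge6$ by Proposition \ref{proposition-Sandor-Yang}, so the discriminant is positive. A quadratic that is at most $\delta$ forces $x$ into the interval between the roots enlarged by $O(\delta)$ (or $O(\sqrt\delta)$ near a double root), so for $m$ large the enlargement is below $\varepsilon$, which is the claimed bound.

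The main obstacle I expect is the bookkeeping in Step 1: making sure the residues with $d(n)\ne0$, including $m$ even with $d(n)=2$ and $\sigma_{\mathcal A}(n)$ possibly odd, really contribute only $O(\sqrt m)$ in total. Alongside this, the $O(\cdot)$ terms in Step 2 must be checked to be $o(m)$, which uses $N\ll\sqrt m$ from the trivial bounds quoted before the theorem.
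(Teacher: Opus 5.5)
Your proof is correct and is essentially the paper's argument. Your Parseval--Cauchy--Schwarz bound $E(\mathcal A)\ge N^4/m+N^2(m-N)^2/(m(m-1))$ is exactly the Lev--S\'ark\"ozy inequality the paper invokes, shifted via $\sum(\sigma_{\mathcal A}(n)-c)^2=\sum(\sigma_{\mathcal A}(n)-\alpha)^2+m(\alpha-c)^2$. Likewise, your $(\sigma_{\mathcal A}(n)-2)(\sigma_{\mathcal A}(n)-2k)\le 0$ on even values is the paper's $(\sigma_{\mathcal A}(n)-(s+1))^2\le (s-1)^2$ with $c=s+1$, your error term of at most $|\mathcal A|(2k-1)$ from the odd residues is the paper's $|\mathcal A|(2s-1)$ with $s=\lfloor R_m/2\rfloor$, and both arrive at the same quadratic $x^2-(2k+1)x+4k\le o(1)$; you simply work with the energy directly and rederive Lev--S\'ark\"ozy from scratch.
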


Since $m$ is sufficiently large, $R_m\ge 6$ by Proposition \ref{proposition-Sandor-Yang}. So, we have
$$
\floor{\frac{R_m}{2}}^2-3\floor{\frac{R_m}{2}}+\frac{1}{4}>0.
$$

\begin{corollary}\label{corollary-new}
Let $m$ be sufficiently large and $\mathcal{A}\in \mathscr{A}_m$ a given set. Then we have
$$
|\mathcal{A}|\le \sqrt{191m}.
$$
\end{corollary}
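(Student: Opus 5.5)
The corollary should follow directly from the upper bound in Theorem \ref{thm:2} together with the uniform bound \eqref{eq-intro-1}, $R_m\le 192$, due to Ding and Zhao. Set $k=\floor{R_m/2}$. Since $m$ is large, Proposition \ref{proposition-Sandor-Yang} gives $R_m\ge 6$, so $3\le k\le 96$, and the radicand $k^2-3k+\frac14$ is positive. Theorem \ref{thm:2} then gives
$$
|\mathcal{A}|^2\le \Big(k+\tfrac12+\sqrt{k^2-3k+\tfrac14}+\varepsilon\Big)m .
$$

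The key step is to bound the function $f(k)=k+\frac12+\sqrt{k^2-3k+\frac14}$. Since $k^2-3k+\frac14<(k-\frac32)^2$, we have $\sqrt{k^2-3k+\frac14}<k-\frac32$. Hence
$$
f(k)<2k-1\le 2\cdot 96-1=191 .
$$
The inequality here is strict, with a fixed gap at $k=96$. Indeed, $(k-\frac32)^2-(k^2-3k+\frac14)=2$, so
$$
k-\tfrac32-\sqrt{k^2-3k+\tfrac14}=\frac{2}{k-\frac32+\sqrt{k^2-3k+\frac14}}\ge \frac{2}{2\cdot 96}=\frac1{96}.
$$
So $f(k)\le 191-\frac1{96}$ for every admissible $k$.

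To finish, apply Theorem \ref{thm:2} with $\varepsilon=\frac1{96}$ (or any smaller positive constant). For $m$ sufficiently large this gives $|\mathcal{A}|^2\le 191m$, that is, $|\mathcal{A}|\le\sqrt{191m}$. The only point needing care is absorbing $\varepsilon$, and the uniform gap of $\frac1{96}$ handles it. Monotonicity of $f$ in $k$ is also clear, but it is not needed, since the bound $f(k)<2k-1$ holds directly for every $k\le 96$.
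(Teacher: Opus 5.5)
Your proof is correct and follows the paper's route: combine the upper bound of Theorem~\ref{thm:2} with $R_m\le 192$ and take $\varepsilon$ small. The paper leaves implicit that $\lfloor R_m/2\rfloor+\frac12+\sqrt{\lfloor R_m/2\rfloor^2-3\lfloor R_m/2\rfloor+\frac14}$ stays strictly below $191$ by a uniform margin. Your bound $f(k)\le 191-\frac1{96}$ makes that explicit, which is exactly what is needed to absorb $\varepsilon$.
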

\begin{proof}
Since $R_m\le 192$, we will get the upper bound by making $\varepsilon$ sufficiently small in Theorem \ref{thm:2}.
\end{proof}

\begin{corollary}\label{corollary-1}
For any fixed $\varepsilon>0$, let $m$ be sufficiently large and $\mathcal{A}\in \mathscr{A}_m$ a given set. If $R_m=6$, then  
  $$
\big(\sqrt{3}-\varepsilon\big)\sqrt{m}\le |\mathcal{A}|\le (2+\varepsilon)\sqrt{m}. 
$$  
\end{corollary}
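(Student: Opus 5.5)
Corollary \ref{corollary-1} is a direct specialization of Theorem \ref{thm:2}, so the plan is to substitute $R_m=6$ and simplify. With $R_m=6$ we have $\floor{R_m/2}=3$, and the quantity under the square root becomes
$$
\floor{\tfrac{R_m}{2}}^2-3\floor{\tfrac{R_m}{2}}+\tfrac14 = 9-9+\tfrac14=\tfrac14,
$$
whose square root is $\tfrac12$. The two constants in Theorem \ref{thm:2} are therefore $3+\tfrac12-\tfrac12=3$ and $3+\tfrac12+\tfrac12=4$. For any $\delta>0$ and all sufficiently large $m$, Theorem \ref{thm:2} thus gives
$$
(3-\delta)m\le |\mathcal{A}|^2\le (4+\delta)m .
$$

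Next we take square roots and match the $\varepsilon$ in the statement. Given $\varepsilon>0$ (we may assume $\varepsilon<\sqrt3$, since otherwise the lower bound is trivial), we choose $\delta>0$ small enough that $\sqrt{3-\delta}\ge\sqrt3-\varepsilon$ and $\sqrt{4+\delta}\le 2+\varepsilon$. For instance, $\delta=\varepsilon^2$ suffices for both: $(\sqrt3-\varepsilon)^2=3-2\sqrt3\varepsilon+\varepsilon^2\ge 3-\varepsilon^2$ holds because $\varepsilon<\sqrt3$, and $(2+\varepsilon)^2=4+4\varepsilon+\varepsilon^2\ge 4+\varepsilon^2$. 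Applying Theorem \ref{thm:2} with this $\delta$ in the role of its $\varepsilon$, and taking $m$ large enough that the theorem holds, yields $(\sqrt3-\varepsilon)\sqrt m\le|\mathcal{A}|\le(2+\varepsilon)\sqrt m$.

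There is no real obstacle here. The only point needing care is that the $\varepsilon$ in Corollary \ref{corollary-1} and the $\varepsilon$ in Theorem \ref{thm:2} are different parameters, and the choice $\delta=\varepsilon^2$ makes the passage from one to the other explicit. The hypothesis $R_m=6$ is consistent with $R_m\ge 6$ for large $m$ from Proposition \ref{proposition-Sandor-Yang}, so the radicand $\tfrac14$ is positive and the formula applies.
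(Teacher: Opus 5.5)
Your proposal is correct and follows the paper's route: the corollary is just Theorem~\ref{thm:2} specialized to $\lfloor R_m/2\rfloor=3$, where the radicand is $\tfrac14$ and the constants become $3$ and $4$ (the paper does the same computation $R_m^*=4$ in the proof of Corollary~\ref{cor-prob-3}), followed by taking square roots. One slip needs fixing: for the lower bound you need $(\sqrt3-\varepsilon)^2=3-2\sqrt3\varepsilon+\varepsilon^2\le 3-\varepsilon^2$, which is what $\varepsilon\le\sqrt3$ gives, and not $\ge$ as you wrote; with the correct direction, $\delta=\varepsilon^2$ does give $\sqrt{3-\delta}\ge\sqrt3-\varepsilon$.
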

We will see later that the assumption $R_m=6$ in Corollary \ref{corollary-1} is natural due
 to Conjecture \ref{value}.

\section*{Acknowledgments and AI Disclosure}

Yuchen Ding, Wei Niu and Xiamiao Zhao acknowledge support from the CSC programs.
Huixi Li's research is supported by the National Natural Science Foundation of China (Grant No. 12561001). 

OpenAI Codex was used as an auxiliary research and proofreading tool. 
The authors take full responsibility for all proofs and claims in this article.

The data and code used for the computations are available upon request. 

\section{Extended values of \texorpdfstring{$R_m$}{Rm}}

\subsection{The lower bound of S\'andor and Yang revisited}
S\'andor and Yang \cite{SY17} used the mean-square lower bound of
Lev and S\'ark\"ozy \cite{LevSarkozy} together with several finite
reductions to prove that the case \(R_m\le 5\) cannot persist for large
\(m\). Their argument actually gave the following proposition.

\begin{proposition}[S\'andor-Yang]\label{proposition-Sandor-Yang}
If \(m>45\), then \(R_m\ge 6\).
\end{proposition}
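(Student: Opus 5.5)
The plan is to argue by contradiction. Fix $m>45$ and suppose some $\mathcal A\subset\mathbb Z_m$ with $k=|\mathcal A|$ satisfies $1\le\sigma_{\mathcal A}(n)\le 5$ for all $n$. The argument would compare two estimates for the additive energy
\[
E(\mathcal A)=\sum_{n\in\mathbb Z_m}\sigma_{\mathcal A}(n)^2 .
\]
An upper bound comes from the constraints on $\sigma_{\mathcal A}$. Pointwise $(\sigma_{\mathcal A}(n)-1)(\sigma_{\mathcal A}(n)-5)\le 0$, and by \eqref{eq-sec-1-1} we have $\sum_n\sigma_{\mathcal A}(n)=k^2$. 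Together these give
\[
E(\mathcal A)\le 6k^2-5m .
\]
This can be sharpened by parity, since the ordered pairs $(a,a')$ and $(a',a)$ pair up. Thus $\sigma_{\mathcal A}(n)$ is odd exactly when the number of $a\in\mathcal A$ with $2a\equiv n$ is odd. So only about $k$ residues (at most $2k$ when $m$ is even) can take the odd values $1,3,5$. Every other $n$ must have $\sigma_{\mathcal A}(n)\in\{2,4\}$. I would set up the resulting small linear program in the numbers $N_j=\#\{n:\sigma_{\mathcal A}(n)=j\}$, $1\le j\le 5$, with constraints
\[
\sum_j N_j=m,\qquad \sum_j jN_j=k^2,
\]
together with the parity restriction on $N_1+N_3+N_5$. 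Its solution gives the best upper bound for $\sum_j j^2N_j$ as a function of $k^2/m$.

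For the lower bound I would use the mean-square estimate of Lev and S\'ark\"ozy \cite{LevSarkozy}. It says that a representation function in $\mathbb Z_m$ cannot be essentially flat. Concretely, $E(\mathcal A)$ exceeds the Cauchy--Schwarz value $k^4/m$ by a secondary term of order roughly $k^3$, or of order $m^{3/2}$ in terms of $m$. Cauchy--Schwarz alone together with the bound above only forces $1\le k^2/m\le 5$. That is no contradiction, so the whole point is to feed the extra Lev--S\'ark\"ozy term into the comparison. Writing $t=k^2/m$, one gets an inequality of the shape
\[
t^2m+c\,t^{3/2}m^{3/2}\,(\text{or similar})\le 6tm-5m-(\text{parity correction}).
\]
For fixed $t$ the left side grows faster in $m$ than the right. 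This inequality should then fail for all admissible $t$ once $m\ge M_0$, for some explicit $M_0$. The constant $5$ is exactly where $6k^2-5m$ just fails to absorb the $k^3$-order surplus.

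This leaves the finite range $45<m<M_0$. I would apply the same inequalities with exact constants to discard most values of $k$ for each such $m$. The few remaining pairs $(m,k)$ would be handled by an exhaustive certified search showing that no admissible $\mathcal A$ exists; these are the ``finite reductions'' mentioned above. The main obstacle is making the Lev--S\'ark\"ozy bound explicit and strong enough that $M_0$ is small enough for the computer check to be feasible. If the constant is too weak, I would first try to strengthen the upper bound via the parity structure, e.g. by using that $\sigma_{\mathcal A}(n)=1$ forces $n=2a$ with $a\in\mathcal A$, so $N_1\le k$.
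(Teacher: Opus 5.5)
Your overall architecture matches the paper's: Lev--S\'ark\"ozy against an upper bound from $1\le\sigma_{\mathcal A}\le 5$ plus parity, then a finite reduction. However, the step meant to give the contradiction for large $m$ rests on a wrong estimate, and as you describe it, it does not work. The Lev--S\'ark\"ozy surplus over $k^4/m$ is not of order $k^3\asymp m^{3/2}$. Taking $c=k^2/m$, the bound is
\[
E(\mathcal A)\ \ge\ \frac{k^4}{m}+\frac{k^2(m-k)^2}{m(m-1)}\ =\ k^2+\frac{(k^2-k)^2}{m-1}.
\]
This coincides with what you get from $E(\mathcal A)=\sum_t\sigma_{\mathcal A,-\mathcal A}(t)^2$ by keeping the diagonal term $\sigma_{\mathcal A,-\mathcal A}(0)^2=k^2$ and applying Cauchy--Schwarz to the other $m-1$ differences.

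So the surplus is $(1+o(1))k^2=(1+o(1))tm$ with $t=k^2/m$. Both sides of your comparison are of order $m$, there is no $t^{3/2}m^{3/2}$ term, and nothing wins by growth rate. Everything is decided by the leading constants. Without parity there is no contradiction at all: $E\le 6k^2-5m$ together with the bound above gives only $t^2+t\le 6t-5+o(1)$, which holds for every $t\in\bigl[\tfrac{5-\sqrt5}{2},\tfrac{5+\sqrt5}{2}\bigr]$. Your explanation that $5$ is where $6k^2-5m$ fails to absorb a $k^3$-order surplus is therefore wrong. Parity is not a fallback for a weak constant; it is the heart of the proof.

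What does work is what the paper does, in the equivalent form $c=3$. At most $k$ residues have odd $\sigma_{\mathcal A}(n)$, since they lie in $\{2a:a\in\mathcal A\}$; this count is at most $k$ for even $m$ too, not $2k$. Every other residue has $\sigma_{\mathcal A}(n)\in\{2,4\}$, where $\sigma^2=6\sigma-8$. Hence $E(\mathcal A)\le 6k^2-8m+3k$. Combined with the display, this gives $t^2-5t+8\le O(m^{-1/2})$, which is impossible since $t^2-5t+8\ge 7/4$.

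This also explains the threshold. With cap $6$ the even values are $\{2,4,6\}$, and the analogous quadratic $t^2-7t+12$ (the case $s=3$ in the proof of Theorem \ref{thm:2}) has real roots.

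The paper drops the $m(k^2/m-3)^2$ term and instead uses $k>\sqrt{2m}-\tfrac12$, which gives an explicit contradiction for $m>91$. For $45<m\le 91$ it does not search over sets. It sharpens the displayed bound by integrality, $\sum_{t\ne0}\sigma_{\mathcal A,-\mathcal A}(t)^2\ge q^2(m-1)+(2q+1)r$ with $k^2-k=q(m-1)+r$. Together with the constraints on $k_1,\dots,k_5$, a purely arithmetic check then excludes every $(m,k)$. Your exhaustive search over surviving pairs would be a legitimate but heavier substitute.
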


Before the proof of Proposition \ref{proposition-Sandor-Yang}, we first introduce the inequality of Lev and S\'ark\"ozy \cite{LevSarkozy}.

\begin{lemma}[Lev-S\'ark\"ozy]\label{lem-L-S}
If $A$ is a subset of a finite
nontrivial abelian group $G$, then for any real number $c$ we have
$$
\sum_{g\in G}\big(r_A(g)-c\big)^2\ge \frac{1}{|G|-1}\left(\frac{|A|^4}{|G|}-2|A|^3+|A|^2|G|\right),
$$
where $r_A(g)=\#\big\{(a_1,a_2)\in A^2: g=a_1+a_2\big\}$.
\end{lemma}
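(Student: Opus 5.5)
The statement to prove is the Lev--S\'ark\"ozy inequality (Lemma \ref{lem-L-S}). I would prove it by Fourier analysis on $G$. Write $n=|G|$, $k=|A|$, and let $\widehat{1_A}(\chi)=\sum_{a\in A}\chi(a)$ for characters $\chi$ of $G$. Then $\sum_g r_A(g)=k^2$, and $\widehat{r_A}(\chi)=\widehat{1_A}(\chi)^2$.

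\textbf{Step 1: reduce to $c=k^2/n$.} Expanding the square,
$$
\sum_g (r_A(g)-c)^2=\sum_g\big(r_A(g)-k^2/n\big)^2+n\big(c-k^2/n\big)^2 .
$$
So the left-hand side is minimized at $c=k^2/n$, and it suffices to treat that value of $c$.

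\textbf{Step 2: apply Parseval.} Parseval gives
$$
\sum_g \big(r_A(g)-k^2/n\big)^2=\frac1n\sum_{\chi\ne\chi_0}|\widehat{1_A}(\chi)|^4 ,
$$
because subtracting the mean removes exactly the trivial-character term. Parseval applied to $1_A$ itself gives
$$
\sum_{\chi\ne\chi_0}|\widehat{1_A}(\chi)|^2=nk-k^2 .
$$

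\textbf{Step 3: Cauchy--Schwarz over the $n-1$ nontrivial characters.} This gives
$$
\sum_{\chi\ne\chi_0}|\widehat{1_A}(\chi)|^4\ge \frac{(nk-k^2)^2}{n-1}.
$$
Dividing by $n$ yields
$$
\frac{k^2(n-k)^2}{n(n-1)}=\frac{1}{n-1}\Big(\frac{k^4}{n}-2k^3+k^2n\Big),
$$
which is exactly the claimed bound.

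There is no real obstacle. The only points needing care are the identification of the mean-subtracted term in Parseval and the algebraic identity at the end. Nontriviality of $G$ is used only to ensure $n-1>0$.
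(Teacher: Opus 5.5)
Your proof is correct and complete. Step~1 is the same mean-shift identity the paper records as \eqref{eq-3-1}. With your normalization, $\widehat{r_A}=\widehat{1_A}^{\,2}$ and both Parseval identities hold. Cauchy--Schwarz over the $n-1$ nontrivial characters then gives exactly $k^2(n-k)^2/(n(n-1))$, and the final algebra checks out.

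The paper itself never proves this lemma; it quotes it from Lev and S\'ark\"ozy. So there is no in-paper proof to match. However, the proof of Proposition~\ref{proposition-Sandor-Yang} contains the physical-space counterpart of your argument. Put $d(t)=\#\{(a,a')\in A^2: a-a'=t\}$. The energy identity $\sum_g r_A(g)^2=\sum_t d(t)^2$, together with $d(0)=k$ and $\sum_{t\ne 0}d(t)=k^2-k$, lets you apply Cauchy--Schwarz over the $n-1$ nonzero differences instead of the $n-1$ nontrivial characters. This gives $\sum_g r_A(g)^2\ge k^2+(k^2-k)^2/(n-1)$. Subtracting $k^4/n$ yields exactly the Lev--S\'ark\"ozy bound at $c=k^2/n$, and your Step~1 then extends it to all $c$. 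Both routes produce the same lower bound $(k^4-2k^3+nk^2)/(n-1)$ on the additive energy.

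What each route buys:
\begin{itemize}
\item Your Fourier route is the shorter, standard proof.
\item The difference-count route needs no characters and works with integer quantities $d(t)$. That integrality is what lets the paper replace Cauchy--Schwarz by the ``as equal as possible'' bound $q^2(m-1)+(2q+1)r$. The result is the strictly sharper inequality \eqref{eq:sharp-variance}, which your $|\widehat{1_A}(\chi)|^2$ formulation cannot reproduce.
\end{itemize}
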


\begin{proof}[Proof of Proposition \ref{proposition-Sandor-Yang}]
We sketch the proof of S\'andor and Yang \cite[Theorem 1.2]{SY17}.
Suppose, for contradiction, that \(R_m\le 5\).  Then there is a set
\(\mathcal{A}\subseteq \Z_m\), \(\card{\mathcal{A}}=k\), such that
\[
1\le \sigma_{\mathcal{A}}(n)\le 5
\]
for any $n\in \Z_m$.
Lev and S\'ark\"ozy's lower bound says that for every real \(c\),
\[
\sum_{n=0}^{m-1}\big(\sigma_{\mathcal{A}}(n)-c\big)^2
\ge
\frac{1}{m-1}\left(\frac{k^4}{m}-2k^3+k^2m\right).
\]
Taking \(c=3\), and then rearranging the right-hand side above gives
\begin{equation}\label{eq:ls-c3}
\sum_{n=0}^{m-1}\big(\sigma_{\mathcal{A}}(n)-3\big)^2
\ge
\frac{k^2(m-k)^2}{m(m-1)}.
\end{equation}
On the other hand, if \(\sigma_{\mathcal{A}}(n)\) is odd, then one of the representations
of \(n\) is diagonal, say \(n=2a\) with \(a\in \mathcal{A}\), since the
off-diagonal pairs occur in symmetric pairs. Since $|\mathcal{A}|=k$, the number of
residue classes with odd \(\sigma_{\mathcal{A}}(n)\) is at most \(k\).  Recall that
\(1\le \sigma_{\mathcal{A}}(n)\le 5\). It then follows that
\[
\big(\sigma_{\mathcal{A}}(n)-3\big)^2\le
\begin{cases}
4,& \text{if } \sigma_{\mathcal{A}}(n)\text{ is odd},\\
1,& \text{if }  \sigma_{\mathcal{A}}(n)\text{ is even}.
\end{cases}
\]
Therefore, by assuming that there are $\ell\le k$ odd $\sigma_{\mathcal{A}}(n)$ we get
\begin{equation}\label{eq:upper-c3}
\sum_{n=0}^{m-1}\big(\sigma_{\mathcal{A}}(n)-3\big)^2\le 4\ell+ (m-\ell) =m+3\ell\le m+3k.
\end{equation}
Combining \eqref{eq:ls-c3} and \eqref{eq:upper-c3} yields the necessary
condition
\begin{equation}\label{eq:first-necessary}
k^2(m-k)^2\le (m+3k)m(m-1).
\end{equation}

Additionally, from S\'andor and Yang \cite[Lemmas 2.2 and 2.3]{SY17} we also have the following elementary bounds
\begin{equation}\label{eq:k-range}
\sqrt{2m}-\frac12<k\le \sqrt{5m}.
\end{equation}
For \(m>91\), the inequalities \eqref{eq:first-necessary} and
\eqref{eq:k-range} are incompatible.  This already proves
\(R_m\ge 6\) for \(m>91\).

For $1\le i\le 5$, let
\[
k_i=\#\big\{n\in \Z_m:\sigma_{\mathcal{A}}(n)=i\big\}.
\]
Since these \(k_i\)'s partition the \(m\) residue classes, any
counterexample must satisfy
\begin{align}
k_1+k_2+k_3+k_4+k_5&=m, \label{eq:dist1}\\
k_1+2k_2+3k_3+4k_4+5k_5&=k^2, \label{eq:dist2}\\
k_1+k_3+k_5&\le k, \label{eq:dist3}
\end{align}
with equality in \eqref{eq:dist3} when \(m\) is odd.  Taking
\(c=k^2/m\) in the Lev--S\'ark\"ozy bound gives the additional
necessary condition
\begin{equation}\label{eq:variance-necessary}
\sum_{i=1}^5\left(i-\frac{k^2}{m}\right)^2k_i
\ge
\frac{k^2(m-k)^2}{m(m-1)}.
\end{equation}
Checking the integer solutions of \eqref{eq:k-range},
\eqref{eq:first-necessary}, and \eqref{eq:dist1}--\eqref{eq:variance-necessary}
already reduces the possible pairs \((m,k)\) to a range whose largest
member is \((50,12)\).

The decisive improvement comes from the additive energy identity
\[
\sum_{n=0}^{m-1}\sigma_{\mathcal{A}}(n)^2=\sum_{n=0}^{m-1}\sigma_{\mathcal{A},-\mathcal{A}}(n)^2.
\]
Here \[\sigma_{\mathcal{A},-\mathcal{A}}(t)=\#\big\{(a,a')\in \mathcal{A}^2:a-a'\equiv t\pmod m\big\}.\]  Note that
\(\sigma_{\mathcal{A},-\mathcal{A}}(0)=k\) and
\[
\sum_{t=1}^{m-1}\sigma_{\mathcal{A},-\mathcal{A}}(t)=k^2-k.
\]
 We write
\[
k^2-k=q(m-1)+r,\qquad 0\le r<m-1,
\]
where 
\[
q=\floor{(k^2-k)/(m-1)}
\]
The sum of squares
\[\sum_{t=1}^{m-1}\sigma_{\mathcal{A},-\mathcal{A}}(t)^2\] is minimized when the nonzero
difference counts are as equal as possible.  Hence
\[
\sum_{t=1}^{m-1}\sigma_{\mathcal{A},-\mathcal{A}}(t)^2\ge q^2(m-1)+(2q+1)r.
\]
Consequently every counterexample must satisfy the sharper inequality
\begin{equation}\label{eq:sharp-variance}
\sum_{i=1}^5\left(i-\frac{k^2}{m}\right)^2k_i
\ge
k^2+q^2(m-1)+(2q+1)r-\frac{k^4}{m}.
\end{equation}

A direct finite check of the integer conditions
\eqref{eq:k-range}, \eqref{eq:first-necessary},
\eqref{eq:dist1}--\eqref{eq:dist3}, and \eqref{eq:sharp-variance}
forces \(m\le 45\).
Therefore no counterexample can
exist for \(m>45\), and hence \(R_m\ge 6\) for \(m>45\).
\end{proof}

\subsection{Extended values of \texorpdfstring{$R_m$}{Rm}}
The computation is certificate-based.  For a proposed row
\((m,\mathcal{A},R)\), we recompute all ordered sums \(a+a'\pmod m\), form the vector
\[
\bigl(\sigma_\mathcal{A}(0),\sigma_{\mathcal{A}}(1),\ldots,
\sigma_{\mathcal{A}}(m-1)\bigr),
\]
and check
\[
\min_n \sigma_{\mathcal{A}}(n)\ge 1,\qquad
\max_n \sigma_{\mathcal{A}}(n)\le R.
\]
For \(m>45\), Proposition \ref{proposition-Sandor-Yang} gives \(R_m\ge 6\).  Hence a
certificate satisfying \(\max_n \sigma_{\mathcal{A}}(n)\le 6\) proves \(R_m=6\).
For \(2\le m\le45\), exactness is verified separately: after checking the displayed
upper certificate for \(R_m\), the verification searches for a certificate with bound
\(R_m-1\) and proves that none exists.

The computation was carried out in two stages.  First, for \(2\le m\le45\), we
verified the displayed upper certificates and then proved that no certificate exists
with bound \(R_m-1\).  This exact verification took about five minutes.  Second, for
larger \(m\), we searched for certificates with maximal representation count at most
\(6\), and then checked each certificate by recomputing all ordered sums.  

The computations were carried out on the server \texttt{lthpc-G2232-G2V2}, equipped with two Intel Xeon Platinum 8360Y processors at 2.40 GHz (72 physical cores, 144 hardware threads) and 1.0 TiB of RAM.
The programs were compiled with \texttt{g++ 11.4.0} using
\texttt{-O3 -std=c++17 -pthread}, and run under Ubuntu 22.04 with Linux kernel 6.8.0-40-generic. No GPU acceleration was used.

The full search for the table took about \(69\) hours in total, including unsuccessful
first passes and later deeper searches.  For each tested cardinality \(k\), we used
three increasing budgets: first \(5000\) attempts, allowing at most \(10^5\) changes in
each attempt; then \(50000\) attempts, allowing at most \(3\cdot10^5\) changes in each
attempt; and finally \(500000\) attempts, allowing at most \(10^6\) changes in each
attempt.  The first pass did not find certificates for \(m=94,98,100\); the
certificates displayed below for these three moduli were found after deeper searches.
The displayed sets are used only as certificates for the claimed upper bounds.  In
particular, for a fixed \(m\), the table records one successful basis; it is not meant
to solve the separate optimization problem of minimizing \(\card{\mathcal{A}}\).

\medskip
\noindent
\begin{minipage}{\textwidth}
\hrule
\smallskip
\textbf{Procedure: certificate search and exact verification.}

\textbf{Input:} a modulus \(m\), a target bound \(R\) for exact verification when
\(m\le45\), the target bound \(6\) for larger moduli, a range of admissible set sizes,
and optional seed sets from nearby moduli.

\begin{enumerate}
\item Normalize candidate sets by translation so that \(0\in\mathcal{A}\), and compute
the full ordered representation vector \(\sigma_\mathcal{A}\).
\item If \(m\le45\), first verify the displayed upper certificate for \(R_m\).  Then try
to build a set with all representation numbers at most \(R_m-1\).  The construction
is exhaustive: at each step one chooses an uncovered residue class and adds a pair
which could represent it.  A partial construction is discarded as soon as some
representation number becomes larger than \(R_m-1\), or when the elementary size
bound \(\lfloor\sqrt{(R_m-1)m}\rfloor\) is exceeded.  If all possibilities are
discarded, no such set exists, and the displayed value of \(R_m\) is exact.
\item If \(m>45\), use Proposition \ref{proposition-Sandor-Yang} for the lower bound
\(R_m\ge6\), and search for a certificate with \(\max_n\sigma_\mathcal{A}(n)\le6\).
The search tries set sizes in the range suggested by the elementary constraints,
using successful sets for nearby moduli as initial candidates.
\item For a candidate \(\mathcal{A}\), measure its defect by first counting uncovered
residue classes and then counting the excess over the bound \(6\).  Thus covering all
residue classes is treated as the first priority, and reducing large representation
numbers is the second priority.
\item Improve the candidate by replacing one element \(a\in\mathcal{A}\) with one
value \(b\notin\mathcal{A}\).  The choice is biased toward removing elements that
contribute to large representation numbers and adding elements that cover missing
residue classes.  Most changes are accepted only when they improve the candidate, but
early in the search some non-improving changes are also allowed.
\item Repeat until either exact nonexistence has been proved, a valid certificate has
been found, or the chosen budget has been exhausted.  Every displayed
certificate is finally verified by recomputing all ordered sums.
\end{enumerate}
\smallskip
\hrule
\end{minipage}
\medskip

Using this procedure, we give below a table of verified values of
\(R_m\) in the range \(2\le m\le100\).  For each listed \(m\), we also provide an
admissible subset \(\mathcal{A}\subseteq\mathbb{Z}_m\) attaining the displayed value.
It is surprising that the values of \(R_m\) seem to converge at \(6\).
\medskip
\begingroup
\small
\setlength{\tabcolsep}{4pt}
\renewcommand{\arraystretch}{1.1}
\setlength{\LTleft}{0pt plus 1fill}
\setlength{\LTright}{0pt plus 1fill}
\begin{longtable}{c|c|>{\raggedright\arraybackslash}p{0.75\textwidth}|c}
\hline
\(m\) & \(\card{\mathcal{A}}\) & \multicolumn{1}{c|}{\(\mathcal{A}\)}  & \(R_m\) \\
\hline
\endfirsthead
\hline
\(m\) & \(\card{\mathcal{A}}\) & \multicolumn{1}{c|}{\(\mathcal{A}\)}  & \(R_m\) \\
\hline
\endhead
\hline
\endfoot
\hline
\endlastfoot
2 & 2 & \{0, 1\} & 2\\
3 & 2 & \{0, 1\} & 2\\
4 & 3 & \{0, 1, 2\} & 3\\
5 & 3 & \{0, 1, 2\} & 3\\
6 & 4 & \{0, 1, 2, 3\} & 4\\
7 & 4 & \{0, 1, 2, 4\} & 3\\
8 & 4 & \{0, 1, 2, 5\} & 4\\
9 & 4 & \{0, 1, 3, 4\} & 4\\
10 & 5 & \{0, 1, 3, 4, 5\} & 4\\
11 & 5 & \{0, 1, 7, 8, 9\} & 4\\
12 & 6 & \{0, 1, 2, 3, 7, 10\} & 4\\
13 & 6 & \{0, 2, 6, 8, 11, 12\} & 4\\
14 & 6 & \{0, 2, 5, 10, 11, 12\} & 4\\
15 & 6 & \{0, 1, 2, 4, 10, 12\} & 4\\
16 & 7 & \{0, 1, 2, 3, 6, 10, 12\} & 5\\
17 & 7 & \{0, 1, 2, 4, 8, 12, 14\} & 5\\
18 & 7 & \{0, 1, 2, 3, 5, 8, 12\} & 5\\
19 & 7 & \{0, 1, 3, 4, 9, 13, 15\} & 4\\
20 & 7 & \{0, 1, 2, 5, 6, 13, 16\} & 5\\
21 & 8 & \{0, 1, 2, 3, 6, 10, 15, 20\} & 5\\
22 & 8 & \{0, 1, 2, 5, 8, 9, 14, 20\} & 5\\
23 & 8 & \{0, 1, 2, 4, 8, 13, 17, 18\} & 5\\
24 & 8 & \{0, 1, 2, 6, 9, 10, 12, 17\} & 5\\
25 & 8 & \{0, 1, 2, 4, 9, 12, 20, 22\} & 5\\
26 & 8 & \{0, 1, 2, 6, 8, 9, 13, 23\} & 6\\
27 & 9 & \{0, 1, 6, 7, 10, 14, 22, 23, 25\} & 5\\
28 & 9 & \{0, 2, 3, 7, 10, 11, 12, 22, 24\} & 5\\
29 & 9 & \{0, 1, 2, 3, 4, 6, 10, 17, 22\} & 6\\
30 & 9 & \{0, 1, 2, 3, 7, 8, 17, 21, 26\} & 6\\
31 & 9 & \{0, 1, 2, 3, 7, 17, 21, 23, 26\} & 6\\
32 & 10 & \{0, 1, 2, 3, 4, 5, 8, 15, 20, 26\} & 6\\
33 & 9 & \{0, 1, 2, 4, 7, 11, 19, 24, 25\} & 6\\
34 & 10 & \{0, 1, 2, 3, 4, 6, 13, 19, 26, 29\} & 6\\
35 & 10 & \{0, 1, 2, 5, 6, 11, 13, 17, 20, 27\} & 5\\
36 & 10 & \{0, 1, 2, 3, 5, 10, 16, 25, 29, 30\} & 6\\
37 & 10 & \{0, 1, 3, 7, 17, 24, 25, 28, 29, 35\} & 4\\
38 & 10 & \{0, 1, 2, 3, 7, 10, 18, 21, 27, 32\} & 6\\
39 & 11 & \{0, 1, 2, 3, 5, 9, 13, 16, 22, 27, 32\} & 5\\
40 & 11 & \{0, 1, 2, 3, 4, 6, 7, 14, 23, 31, 36\} & 6\\
41 & 11 & \{0, 1, 2, 3, 4, 8, 9, 19, 24, 28, 31\} & 6\\
42 & 11 & \{0, 1, 2, 3, 5, 15, 23, 28, 31, 32, 38\} & 6\\
43 & 11 & \{0, 1, 2, 3, 5, 9, 14, 20, 21, 30, 35\} & 6\\
44 & 11 & \{0, 1, 2, 8, 11, 15, 18, 23, 27, 38, 43\} & 6\\
45 & 11 & \{0, 1, 2, 4, 7, 9, 13, 21, 22, 32, 36\} & 6\\
46 & 13 & \{0, 2, 7, 16, 17, 18, 21, 28, 31, 34, 36, 39, 40\} & 6\\
47 & 13 & \{0, 6, 10, 12, 15, 17, 22, 23, 25, 26, 38, 39, 43\} & 6\\
48 & 13 & \{0, 1, 12, 19, 21, 26, 27, 32, 36, 38, 44, 45, 46\} & 6\\
49 & 14 & \{0, 3, 11, 13, 17, 18, 21, 23, 24, 27, 39, 40, 41, 47\} & 6\\
50 & 14 & \{0, 3, 8, 9, 10, 12, 14, 20, 23, 25, 27, 39, 41, 49\} & 6\\
51 & 14 & \{0, 6, 7, 9, 10, 12, 20, 21, 25, 27, 29, 30, 35, 47\} & 6\\
52 & 14 & \{0, 4, 12, 15, 19, 20, 25, 29, 30, 32, 41, 43, 49, 50\} & 6\\
53 & 14 & \{0, 4, 6, 7, 9, 11, 17, 23, 25, 32, 33, 36, 44, 48\} & 6\\
54 & 14 & \{0, 2, 5, 14, 16, 19, 26, 27, 31, 37, 46, 50, 51, 52\} & 6\\
55 & 14 & \{0, 2, 10, 15, 17, 22, 23, 25, 26, 35, 38, 39, 43, 49\} & 6\\
56 & 14 & \{0, 2, 10, 22, 25, 31, 35, 36, 38, 41, 43, 49, 53, 54\} & 6\\
57 & 15 & \{0, 7, 9, 10, 15, 21, 25, 26, 33, 34, 44, 47, 48, 51, 53\} & 6\\
58 & 15 & \{0, 8, 11, 18, 22, 25, 30, 31, 34, 35, 37, 41, 42, 43, 56\} & 6\\
59 & 15 & \{0, 13, 14, 23, 24, 26, 30, 31, 36, 39, 40, 42, 44, 50, 51\} & 6\\
60 & 15 & \{0, 12, 15, 17, 23, 28, 31, 32, 34, 38, 39, 41, 43, 53, 59\} & 6\\
61 & 15 & \{0, 6, 10, 12, 16, 25, 27, 28, 30, 34, 35, 37, 41, 42, 45\} & 6\\
62 & 15 & \{0, 2, 3, 8, 15, 17, 18, 25, 29, 38, 40, 46, 52, 57, 61\} & 6\\
63 & 15 & \{0, 1, 2, 5, 7, 8, 10, 16, 20, 34, 37, 46, 47, 48, 59\} & 6\\
64 & 15 & \{0, 9, 11, 17, 25, 26, 27, 29, 30, 36, 40, 42, 47, 48, 49\} & 6\\
65 & 16 & \{0, 3, 5, 9, 16, 19, 21, 22, 24, 28, 30, 31, 38, 50, 63, 64\} & 6\\
66 & 16 & \{0, 2, 5, 6, 13, 18, 19, 23, 39, 41, 47, 49, 50, 54, 56, 65\} & 6\\
67 & 16 & \{0, 4, 7, 11, 12, 23, 25, 26, 28, 29, 34, 36, 38, 44, 54, 58\} & 6\\
68 & 16 & \{0, 3, 4, 15, 18, 19, 20, 21, 28, 29, 32, 41, 52, 60, 62, 67\} & 6\\
69 & 16 & \{0, 1, 10, 15, 17, 18, 23, 25, 27, 34, 38, 39, 43, 49, 57, 67\} & 6\\
70 & 16 & \{0, 1, 10, 26, 30, 37, 38, 39, 43, 45, 49, 56, 59, 61, 62, 66\} & 6\\
71 & 16 & \{0, 1, 17, 18, 21, 23, 27, 34, 38, 41, 43, 49, 52, 57, 59, 62\} & 6\\
72 & 16 & \{0, 1, 3, 4, 9, 11, 17, 20, 32, 41, 47, 50, 52, 54, 59, 69\} & 6\\
73 & 16 & \{0, 3, 13, 17, 19, 28, 29, 40, 47, 48, 50, 53, 55, 62, 68, 72\} & 6\\
74 & 17 & \{0, 6, 10, 13, 15, 22, 23, 25, 38, 39, 43, 49, 53, 54, 57, 60, 62\} & 6\\
75 & 17 & \{0, 1, 3, 7, 8, 10, 16, 18, 24, 34, 45, 51, 54, 56, 59, 63, 70\} & 6\\
76 & 17 & \{0, 9, 12, 26, 28, 31, 38, 39, 41, 42, 43, 48, 49, 61, 63, 67, 72\} & 6\\
77 & 17 & \{0, 1, 3, 4, 8, 10, 12, 20, 22, 38, 49, 52, 53, 56, 58, 62, 64\} & 6\\
78 & 17 & \{0, 3, 9, 12, 14, 17, 20, 22, 23, 27, 30, 42, 49, 61, 65, 66, 67\} & 6\\
79 & 17 & \{0, 3, 12, 13, 17, 19, 20, 27, 28, 30, 32, 42, 48, 53, 68, 70, 74\} & 6\\
80 & 17 & \{0, 13, 14, 17, 19, 23, 28, 29, 32, 40, 48, 50, 55, 62, 73, 75, 79\} & 6\\
81 & 17 & \{0, 1, 4, 5, 16, 25, 26, 31, 33, 44, 46, 49, 60, 68, 72, 74, 80\} & 6\\
82 & 17 & \{0, 1, 7, 11, 12, 23, 24, 26, 28, 31, 39, 40, 46, 49, 53, 63, 74\} & 6\\
83 & 18 & \{0, 1, 3, 7, 8, 9, 11, 12, 25, 29, 31, 41, 45, 52, 60, 65, 67, 77\} & 6\\
84 & 18 & \{0, 2, 12, 15, 17, 23, 25, 26, 33, 38, 39, 42, 47, 53, 67, 68, 72, 79\} & 6\\
85 & 18 & \{0, 2, 3, 6, 8, 15, 18, 20, 44, 49, 54, 55, 62, 63, 65, 68, 72, 76\} & 6\\
86 & 18 & \{0, 2, 11, 17, 27, 28, 31, 33, 40, 41, 46, 48, 50, 52, 61, 78, 83, 85\} & 6\\
87 & 18 & \{0, 1, 6, 8, 17, 19, 20, 24, 27, 42, 45, 49, 55, 58, 59, 64, 75, 79\} & 6\\
88 & 18 & \{0, 2, 9, 17, 29, 30, 46, 49, 50, 52, 56, 64, 74, 75, 80, 81, 83, 85\} & 6\\
89 & 18 & \{0, 6, 7, 10, 20, 27, 28, 47, 48, 52, 55, 60, 64, 66, 70, 71, 86, 87\} & 6\\
90 & 18 & \{0, 12, 18, 21, 27, 29, 31, 32, 34, 46, 49, 52, 53, 57, 65, 76, 81, 82\} & 6\\
91 & 18 & \{0, 1, 5, 17, 22, 30, 31, 36, 46, 48, 51, 58, 64, 73, 81, 83, 84, 85\} & 6\\
92 & 18 & \{0, 10, 17, 28, 32, 38, 40, 43, 45, 47, 63, 68, 69, 71, 72, 82, 83, 88\} & 6\\
93 & 19 & \{0, 4, 5, 7, 11, 12, 25, 28, 34, 35, 36, 38, 44, 49, 50, 53, 70, 75, 85\} & 6\\
94 & 18 & \{0, 20, 22, 27, 30, 33, 37, 38, 45, 55, 59, 61, 64, 65, 78, 80, 85, 87\} & 6\\
95 & 19 & \{0, 3, 9, 10, 20, 22, 23, 25, 33, 37, 38, 42, 49, 54, 61, 63, 67, 69, 94\} & 6\\
96 & 18 & \{0, 8, 15, 17, 35, 37, 38, 39, 42, 62, 66, 71, 72, 78, 83, 85, 93, 94\} & 6\\
97 & 19 & \{0, 10, 13, 14, 21, 23, 27, 30, 35, 38, 39, 45, 50, 56, 58, 76, 88, 91, 92\} & 6\\
98 & 19 & \{0, 1, 21, 25, 28, 30, 43, 46, 51, 60, 66, 70, 79, 80, 82, 85, 91, 92, 93\} & 6\\
99 & 19 & \{0, 3, 10, 12, 30, 32, 38, 42, 45, 47, 61, 63, 68, 69, 74, 82, 83, 85, 86\} & 6\\
100 & 19 & \{0, 4, 5, 9, 12, 28, 31, 33, 39, 42, 46, 48, 58, 59, 60, 67, 83, 90, 99\} & 6\\
\end{longtable}
\endgroup
Based on the table list of $R_m$ above, it seems reasonable  to make the following conjecture whose positive answers would give solutions to basic problems involving Ruzsa's numbers asked by Chen \cite[Problem 1 and Problem 2]{Ch08}.

\begin{conjecture}\label{value}
    $R_m=6$ for $m\ge 40$.
\end{conjecture}

\section{Proof of Theorem \ref{thm:1}}
 Motivated by Proposition \ref{prop:1} and Proposition \ref{prop:2}, we will prove the following new kind of comparison lemma. It is worth mentioning that our new lemma is different from Proposition \ref{prop:2} because of the constraints of $m_1$ and $m_2$ therein. 

 \begin{lemma}\label{lem:1}
   For any positive integer $m$ we have both
   $$R_m\le 4R_{m+1} \quad \text{and} \quad R_{m+1}\le 4R_{m}.$$
 \end{lemma}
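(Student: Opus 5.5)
The plan is to prove both inequalities by the same kind of construction as in Chen's comparison lemma (Proposition \ref{prop:1}) and in Proposition \ref{prop:2}. We take an optimal set for one modulus, view it as a set of integers in a fundamental interval, and build a set for the neighbouring modulus from a few translates of pieces of it. We then check coverage and bound the representation counts by at most four terms, each at most the old Ruzsa number.

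\emph{First inequality $R_m\le 4R_{m+1}$.} Fix $\mathcal A\in\mathscr A_{m+1}$, so $\mathcal A\subset[0,m]$ and $1\le\sigma_{\mathcal A}(n)\le R_{m+1}$ in $\mathbb Z_{m+1}$. Split $\mathcal A=\mathcal A_1\cup\mathcal A_2$ with $\mathcal A_1=\mathcal A\cap[0,\lfloor m/2\rfloor]$ and $\mathcal A_2=\mathcal A\setminus \mathcal A_1$. Reduce
$$\mathcal B=\mathcal A\cup(\mathcal A_2-1)$$
modulo $m$.

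For coverage, take $n\in[0,m-1]$. Some pair in $\mathcal A^2$ has integer sum $a+a'\in\{n,\,n+m+1\}$.
\begin{itemize}
\item If the sum is $n$, then the same pair represents $n$ modulo $m$.
\item If the sum is $n+m+1>m$, then at least one of $a,a'$ lies in $\mathcal A_2$. Replacing it by its shift by $-1$ gives a pair in $\mathcal B^2$ with sum $n+m\equiv n\pmod m$.
\end{itemize}
For the upper bound, write each pair of $\mathcal B^2$ as coming from $(a,a')\in\mathcal A^2$ with a total shift $\delta\in\{0,1,2\}$ (the shift is $2$ only when both entries are shifted). A pair in $\mathcal B^2$ represents $n$ modulo $m$ only if $a+a'-\delta\in\{n,n+m\}$. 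I will count the shifted patterns separately: unshifted--unshifted, shifted--unshifted (two orders), and shifted--shifted. Two constraints then restrict the possible integer sums:
\begin{itemize}
\item shifted elements exceed $m/2$, so sums involving them are large;
\item unshifted pairs from $\mathcal A_1$ have sum at most $m$.
\end{itemize}
The aim is to show that the four patterns together involve at most four values of $\sigma_{\mathcal A}(\cdot)$ modulo $m+1$, giving $\sigma_{\mathcal B}(n)\le 4R_{m+1}$.

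\emph{Second inequality $R_{m+1}\le 4R_m$.} This is symmetric. Start from $\mathcal A\in\mathscr A_m\subset[0,m-1]$ and use $\mathcal B=\mathcal A\cup(\mathcal A_2+1)$ modulo $m+1$, with $\mathcal A_2$ the upper half. For coverage:
\begin{itemize}
\item a representation $a+a'=n$ stays valid;
\item a representation $a+a'=n+m$ with $n\ge1$ becomes $n+m+1$ after shifting its large element;
\item the residue $n=m$ of $\mathbb Z_{m+1}$ corresponds to residue $0$ of $\mathbb Z_m$, so it needs either the sum $m$ unshifted or the sum $m-1$ with one shift. This boundary class has to be handled by hand, possibly by enlarging $\mathcal A_2$ slightly or by a translation of $\mathcal A$.
\end{itemize}
The counting argument is the same as in the first inequality.

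The main obstacle is the counting step, not coverage. A naive count gives up to two admissible integer sums for each of the three shift types, i.e.\ about $6$--$8$ values of $\sigma$, not $4$. The whole point is to use the threshold $m/2$ to rule out the extra sum values in each pattern. If the half-split does not achieve this, I will instead place the threshold so that sums which wrap around necessarily involve the shifted part. Failing that, I will choose a translate of $\mathcal A$ (allowed, since translating $\mathcal A$ only translates $\sigma_{\mathcal A}$) so that the problematic sums cannot occur, accepting a factor $4$ from the two orders of the mixed pattern plus the two pure patterns.
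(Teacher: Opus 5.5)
\textbf{There is a genuine gap: neither inequality is actually proved.} Your construction is the right one and is the paper's: shift the upper half of an optimal set by $\pm1$. Your coverage argument for $R_m\le 4R_{m+1}$ is also correct. But the proposal stops where the content of the lemma lies. The bound $\sigma_{\mathcal B}(n)\le 4R$ is only announced as an ``aim'', followed by fallbacks (moving the threshold, translating $\mathcal A$). For $R_{m+1}\le 4R_m$, even the coverage of the class $m\in\mathbb Z_{m+1}$ is left open.

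Your diagnosis of the obstacle is also off. You do not need to cut the admissible integer sums down to four. An unwrapped sum $s$ and a wrapped sum $s+M$ ($M$ the old modulus) both contribute to the \emph{same} quantity $\sigma_{\mathcal A}(s)$, so they pair up. The threshold is only needed to kill the unwrapped sums in which both summands are large.

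\textbf{How to complete $R_{m+1}\le 4R_m$.} Take $\mathcal A_1=\{a\in\mathcal A:a\ge\floor{m/2}\}$, with the inequality inclusive, and $\mathcal B=\mathcal A\cup(\mathcal A_1+1)$. Put $f(s)=\#\{(a,a')\in\mathcal A^2:a+a'=s\}$. On $\mathcal A_1^2$ we have $a+a'\ge 2\floor{m/2}\ge m-1$. Hence for $0\le n\le m-1$ the two-shift unwrapped term and the large--large part of the one-shift unwrapped term vanish, and
\[
\sigma_{\mathcal B}(n)\le f(n)+f(n-1)+f(n+m+1)+2f(n+m)+f(n+m-1)\le 2\sigma_{\mathcal A}(n)+\sigma_{\mathcal A}(n-1)+\sigma_{\mathcal A}(n+1),
\]
with $\sigma_{\mathcal A}$ computed in $\mathbb Z_m$. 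The factor $2$ comes only from the two orders of the one-shift wrapped sum.

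The class $m$ is covered by the representation of $m-1$ in $\mathbb Z_m$. It must be the integer identity $a+a'=m-1$, since $a+a'\le 2m-2$, and its larger summand is $\ge\floor{m/2}$, so it can be shifted. The count for this class is at most $\sigma_{\mathcal A}(0)+2\sigma_{\mathcal A}(-1)+\sigma_{\mathcal A}(-2)\le 4R_m$.

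\textbf{Your threshold in $R_m\le 4R_{m+1}$ causes trouble for odd $m$.} The condition $a>\floor{m/2}$ admits $a=(m+1)/2$. Then two shifted copies of $(m-1)/2$ give the unwrapped sum $m-1$ at $n=m-1$. At $n=0$, the diagonal pair $((m+1)/2,(m+1)/2)$ yields two ordered pairs in $\mathcal B^2$ but is counted only once in $\sigma_{\mathcal A}(0)$. Both terms fall outside the pattern $2\sigma(n)+\sigma(n-1)+\sigma(n+1)$. They can be absorbed using slack and $R_{m+1}\ge2$, but only with bookkeeping you have not done.

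The paper instead shifts $\{a\ge m/2+1\}$ down by $1$. Any two shifted elements then come from a sum $\ge m+2$, so the analogous bound holds for $1\le n\le m-1$, now with $\sigma_{\mathcal A}$ in $\mathbb Z_{m+1}$. Coverage still works: $a+a'=n+m+1\ge m+2$ forces one summand $\ge m/2+1$. The class $0\in\mathbb Z_m$ is covered via the representation of $m$ in $\mathbb Z_{m+1}$, which is necessarily the integer identity $a+a'=m$. Its count satisfies
\[
\sigma_{\mathcal B}(0)\le\sigma_{\mathcal A}(-1)+\sigma_{\mathcal A}(0)+\sigma_{\mathcal A}(1)+2\le 4R_{m+1},
\]
using $R_{m+1}\ge 2$.
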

 \begin{proof}
 {\bf We first prove the assertion $R_{m+1}\le 4R_{m}$}. 

 Suppose that $\mathcal{A}$ is a subset of $\mathbb{Z}_m$ such that for any $n\in \mathbb{Z}_m$ we have
 $$
 1\le \sigma_{\mathcal{A}}(n)\le R_{m}.
 $$
Without loss of generality, we may assume $0\le a\le m-1$ for any $a\in \mathcal{A}$. Define
$$
\mathcal{A}_{1}=\big\{a\in \mathcal{A}: a\ge \floor{m/2}\big\}
$$
and
$$
\mathcal{A}_{+}=\mathcal{A}_{1}+1:=\{a+1: a\in \mathcal{A}_{1}\}.
$$
Now, we let
$
\mathcal{B}=\mathcal{A}\cup \mathcal{A}_{+},
$
and regard $\mathcal{B}$ as a subset of $\mathbb{Z}_{m+1}$.

First, we show that $\mathcal{B}$ is an additive basis of $\mathbb{Z}_{m+1}$, i.e., $\sigma_{\mathcal{B}}(n)\ge 1$ for any $n\in \mathbb{Z}_{m+1}$. For any $0\le n\le m-1$, there are two elements $a, a'\in \mathcal{A}$ such that
$$
n\equiv a+a'\pmod{m},
$$
which implies $n=a+a'$ or $n=a+a'-m$ since $0\le a+a'\le 2m-2$. If $n=a+a'$, then 
$
n\equiv a+a'\pmod{m+1}.
$
Now, suppose that $n=a+a'-m$. Then $a+a'\ge m$, and hence at least one of $a$ and $a'$ is $\ge \floor{m/2}$. We may assume without loss of generality that $a\ge \floor{m/2}$. Then we have
$$
n=(a+1)+a'-(m+1)\equiv (a+1)+a'\pmod{m+1},
$$
which means that $\sigma_{\mathcal{B}}(n)\ge 1$ for any $0\le n\le m-1$. It remains to show $\sigma_{\mathcal{B}}(m)\ge 1$. Note that
there are two elements $a_1$ and $a_2$ of $\mathcal{A}$ such that
$$
m-1\equiv a_1+a_2\pmod{m}.
$$
Since $0\le a_1+a_2\le 2m-2$, we must have $m-1=a_1+a_2$. It follows that at least one of $a_1$ and $a_2$ is $\ge  \floor{m/2}$. Without loss of generality, we assume that $a_1\ge  \floor{m/2}$. Hence,
$$
m=(a_1+1)+a_2 \equiv (a_1+1)+a_2\pmod{m+1},
$$
completing the proof of $\sigma_{\mathcal{B}}(n)\ge 1$ for any $n\in \mathbb{Z}_{m+1}$.

Next, we will prove $\sigma_{\mathcal{B}}(n)\le 4R_m$ for any $0\le n\le m$. Let
$$
\mathcal{A}_{2}=\mathcal{A}\setminus\mathcal{A}_1=\big\{a\in \mathcal{A}: a< \floor{m/2}\big\}.
$$
By this symbol, we have
$$
\mathcal{B}=\mathcal{A}_1\cup \mathcal{A}_{2}\cup\mathcal{A}_{+}.
$$
For any subsets $\mathcal{S}_1$ and $\mathcal{S}_2$ of integers, define
$$
f_{\mathcal{S}_1,\mathcal{S}_2}(n)=\#\big\{(s_1,s_2)\in \mathcal{S}_1\times\mathcal{S}_2: n=s_1+s_2\big\}.
$$
We first consider the range $0\le n\le m-1$. Suppose that
\begin{align*}
    n\equiv b_1+b_2\pmod{m+1},
\end{align*}
where $b_1$ and $b_2$ are two elements of $\mathcal{B}$.
Then $n=b_1+b_2$ or $n=b_1+b_2-(m+1)$ since $0\le b_1+b_2\le 2m$. In the case $n=b_1+b_2$ we have
\begin{align*}
    \#\big\{(b_1, b_2)&\in \mathcal{B}^2 :  n=b_1 +b_2\big\}
    \le f_{\mathcal{A}_{1},\mathcal{A}_{1}}(n)+f_{\mathcal{A}_{1},\mathcal{A}_{2}}(n)+f_{\mathcal{A}_{2},\mathcal{A}_{1}}(n)\\
&+f_{\mathcal{A}_{1},\mathcal{A}_{+}}(n)+f_{\mathcal{A}_{+},\mathcal{A}_{1}}(n)+f_{\mathcal{A}_{2},\mathcal{A}_{2}}(n)+f_{\mathcal{A}_{2},\mathcal{A}_{+}}(n)+f_{\mathcal{A}_{+},\mathcal{A}_{2}}(n)+f_{\mathcal{A}_{+},\mathcal{A}_{+}}(n).
\end{align*}
Note that
$$
f_{\mathcal{A}_{1},\mathcal{A}_{+}}(n)=f_{\mathcal{A}_{1},\mathcal{A}_{1}}(n-1)=0,\quad f_{\mathcal{A}_{2},\mathcal{A}_{+}}(n)=f_{\mathcal{A}_{2},\mathcal{A}_{1}}(n-1),
$$
$$
f_{\mathcal{A}_{+},\mathcal{A}_{1}}(n)=f_{\mathcal{A}_{1},\mathcal{A}_{1}}(n-1)=0,\quad f_{\mathcal{A}_{+},\mathcal{A}_{2}}(n)=f_{\mathcal{A}_{1},\mathcal{A}_{2}}(n-1),
$$
and
$$
f_{\mathcal{A}_{+},\mathcal{A}_{+}}(n)=f_{\mathcal{A}_{1},\mathcal{A}_{1}}(n-2)=0
$$
since $a+a'\ge 2\floor{m/2}\ge m-1>n-1$ for any $a, a'\in \mathcal{A}_{1}$,
from which we obtain
\begin{align}\label{eq-proof-1}
    \#\big\{(b_1, b_2)\in \mathcal{B}^2& :  n=b_1 +b_2\big\}
    \le f_{\mathcal{A}_{1},\mathcal{A}_{1}}(n)+f_{\mathcal{A}_{1},\mathcal{A}_{2}}(n)\nonumber\\
    &+f_{\mathcal{A}_{2},\mathcal{A}_{1}}(n)+f_{\mathcal{A}_{2},\mathcal{A}_{2}}(n)+f_{\mathcal{A}_{2},\mathcal{A}_{1}}
    (n-1)+f_{\mathcal{A}_{1},\mathcal{A}_{2}}
    (n-1).
\end{align}
Similarly, in the case $n=b_1+b_2-(m+1)$ we have
\begin{align}\label{eq-proof-2}
    \#\big\{(b_1,b_2&)\in \mathcal{B}^2: n + (m+1)= b_1+b_2\big\}\nonumber\\
    &\le f_{\mathcal{A}_{1},\mathcal{A}_{1}}(n+1+m)+f_{\mathcal{A}_{1},\mathcal{A}_{2}}(n+1+m)
    +f_{\mathcal{A}_{2},\mathcal{A}_{1}}(n+1+m)\nonumber\\
    &\quad \quad +f_{\mathcal{A}_{1},\mathcal{A}_{+}}(n+1+m)+f_{\mathcal{A}_{+},\mathcal{A}_{1}}(n+1+m)+f_{\mathcal{A}_{2},\mathcal{A}_{2}}(n+1+m)\nonumber\\
    &\quad \quad +f_{\mathcal{A}_{2},\mathcal{A}_{+}}(n+1+m)+f_{\mathcal{A}_{+},\mathcal{A}_{2}}(n+1+m)+f_{\mathcal{A}_{+},\mathcal{A}_{+}}(n+1+m)\nonumber\\
    &= f_{\mathcal{A}_{1},\mathcal{A}_{1}}(n+1+m)+f_{\mathcal{A}_{1},\mathcal{A}_{2}}(n+1+m)
    +f_{\mathcal{A}_{2},\mathcal{A}_{1}}(n+m+1)\nonumber\\
    &\quad \quad +2f_{\mathcal{A}_{1},\mathcal{A}_{1}}(n+m)+f_{\mathcal{A}_{2},\mathcal{A}_{2}}(n+1+m)\nonumber\\
    &\quad \quad +f_{\mathcal{A}_{2},\mathcal{A}_{1}}(n+m)+f_{\mathcal{A}_{1},\mathcal{A}_{2}}(n+m)+f_{\mathcal{A}_{1},\mathcal{A}_{1}}(n-1+m).
\end{align}
Hence, we conclude from (\ref{eq-proof-1}) and (\ref{eq-proof-2}),
\begin{align}\label{eq-proof-3}
    \sigma_{\mathcal{B}}(n)&\le f_{\mathcal{A},\mathcal{A}}(n)+2f_{\mathcal{A},\mathcal{A}}(n+m)+f_{\mathcal{A},\mathcal{A}}(n-1)+f_{\mathcal{A},\mathcal{A}}(n-1+m)+f_{\mathcal{A},\mathcal{A}}(n+1+m)\nonumber\\
    &\le 2\#\big\{(a_1,a_2)\in \mathcal{A}^2: n\equiv a_1+a_2 \pmod{m}\big\}\nonumber\\
    &\quad \quad +\#\big\{(a_1,a_2)\in \mathcal{A}^2: n-1\equiv a_1+a_2\pmod{m}\big\}\nonumber\\
    &\quad \quad +\#\big\{(a_1,a_2)\in \mathcal{A}^2: n+1\equiv a_1+a_2\pmod{m}\big\}\nonumber\\
    &\le 4R_m.
\end{align}
It remains to estimate $\sigma_{\mathcal B}(m)$. Suppose that
$$
m\equiv b_1+b_2\pmod{m+1},
$$
where $b_1,b_2\in\mathcal B$. Since $0\le b_1+b_2\le 2m$, we must have $b_1+b_2=m$. Each $b_i$ can be written in the form $b_i=a_i+\varepsilon_i$, where $a_i\in\mathcal A$ and $\varepsilon_i\in\{0,1\}$; here $\varepsilon_i=1$ can occur only when $a_i\in\mathcal A_1$. Counting all such choices gives an upper bound, and hence
\begin{align*}
\sigma_{\mathcal B}(m)
&\le f_{\mathcal A,\mathcal A}(m)+2f_{\mathcal A,\mathcal A}(m-1)+f_{\mathcal A,\mathcal A}(m-2)\\
&\le \sigma_{\mathcal A}(0)+2\sigma_{\mathcal A}(-1)+\sigma_{\mathcal A}(-2)\\
&\le 4R_m.
\end{align*}
Therefore, we have $R_{m+1}\le 4 R_m$.

\medskip

{\bf Proof of the assertion $R_{m}\le 4R_{m+1}$}. 

The cases $m=1$ and $2$ are trivial. Now, we assume $m\ge 3$.
Suppose that $\mathcal{A}$ is a subset of $\mathbb{Z}_{m+1}$ such that for any $n\in \mathbb{Z}_{m+1}$ we have
 $$
 1\le \sigma_{\mathcal{A}}(n)\le R_{m+1}.
 $$
Without loss of generality, we may assume $0\le a\le m$ for any $a\in \mathcal{A}$. Define
$$
\mathcal{A}_{1}=\big\{a\in \mathcal{A}: a\ge m/2+1\big\}
$$
and
$$
\mathcal{A}_{-}=\mathcal{A}_{1}-1:=\{a-1: a\in \mathcal{A}_{1}\}.
$$
Now, we let
$
\mathcal{B}=\mathcal{A}\cup \mathcal{A}_{-},
$
and regard $\mathcal{B}$ as a subset of $\mathbb{Z}_{m}$.

First, we show that $\mathcal{B}$ is an additive basis of $\mathbb{Z}_{m}$, i.e., $\sigma_{\mathcal{B}}(n)\ge 1$ for any $n\in \mathbb{Z}_{m}$. We first consider $n=0$. Since $\mathcal A$ is an additive basis of $\mathbb Z_{m+1}$, there exist $a,a'\in\mathcal A$ such that
$$
m\equiv a+a'\pmod{m+1}.
$$
Since $0\le a+a'\le 2m$, we must have $a+a'=m$. Hence $0\equiv a+a'\pmod m$, and so $\sigma_{\mathcal B}(0)\ge1$.

Now let $1\le n\le m-1$. There are two elements $a, a'\in \mathcal{A}$ such that
$$
n\equiv a+a'\pmod{m+1},
$$
which implies $n=a+a'$ or $n=a+a'-(m+1)$ since $0\le a+a'\le 2m$. If $n=a+a'$, then 
$
n\equiv a+a'\pmod{m}.
$
Now, suppose that $n=a+a'-(m+1)$. Then $a+a'=n+m+1\ge m+2$. If both $a$ and $a'$ were $<m/2+1$, then, since $a$ and $a'$ are integers, we would have $a+a'\le m+1$, a contradiction. Thus at least one of $a$ and $a'$ is $\ge m/2+1$. We may assume without loss of generality that $a\ge m/2+1$. Then we have
$$
n=(a-1)+a'-m\equiv (a-1)+a'\pmod{m},
$$
which means that $\sigma_{\mathcal{B}}(n)\ge 1$ for any $1\le n\le m-1$. Hence $\mathcal B$ is an additive basis of $\mathbb Z_m$.

Next, we will prove $\sigma_{\mathcal{B}}(n)\le 4R_{m+1}$ for any $0\le n\le m-1$. Let
$$
\mathcal{A}_{2}=\mathcal{A}\setminus\mathcal{A}_1=\big\{a\in \mathcal{A}: a< m/2+1\big\}.
$$
By this symbol, we have
$$
\mathcal{B}=\mathcal{A}_1\cup \mathcal{A}_{2}\cup\mathcal{A}_{-}.
$$
It would be convenient to consider $\sigma_{\mathcal{B}}(0)$ separately. Suppose that
$$
b_1+b_2\equiv 0\pmod{m},
$$
where $b_1,b_2\in\mathcal B$. Then $b_1+b_2=0$ or $m$ or $2m$. The cases $b_1+b_2=0$ and $b_1+b_2=2m$ contribute at most two pairs in total. If $b_1+b_2=m$, then the corresponding elements of $\mathcal A$ have sum $m$, $m+1$, or $m+2$, according as neither, exactly one, or both of the two summands come from $\mathcal A_-$. Therefore
$$
\sigma_{\mathcal{B}}(0)\le \sigma_{\mathcal{A}}(-1)+\sigma_{\mathcal{A}}(0)+\sigma_{\mathcal{A}}(1)+2\le 3R_{m+1}+2\le 4R_{m+1},
$$
where we used $R_{m+1}\ge 2$.

In what follows, let $n$ be an integer satisfying $1\le n\le m-1$. Suppose that
\begin{align*}
    n\equiv b_1+b_2\pmod{m},
\end{align*}
where $b_1$ and $b_2$ are two elements of $\mathcal{B}$.
Then $n=b_1+b_2$ or $n=b_1+b_2-m$ since $0\le b_1+b_2\le 2m$. In the case $n=b_1+b_2$ we have
\begin{align*}
    \#\big\{(b_1, b_2)&\in \mathcal{B}^2 :  n=b_1 +b_2\big\}
    \le f_{\mathcal{A}_{1},\mathcal{A}_{1}}(n)+f_{\mathcal{A}_{1},\mathcal{A}_{2}}(n)+f_{\mathcal{A}_{2},\mathcal{A}_{1}}(n)\\
&+f_{\mathcal{A}_{1},\mathcal{A}_{-}}(n)+f_{\mathcal{A}_{-},\mathcal{A}_{1}}(n)+f_{\mathcal{A}_{2},\mathcal{A}_{2}}(n)+f_{\mathcal{A}_{2},\mathcal{A}_{-}}(n)+f_{\mathcal{A}_{-},\mathcal{A}_{2}}(n)+f_{\mathcal{A}_{-},\mathcal{A}_{-}}(n).
\end{align*}
Note that
$$
f_{\mathcal{A}_{1},\mathcal{A}_{-}}(n)=f_{\mathcal{A}_{1},\mathcal{A}_{1}}(n+1)=0,\quad f_{\mathcal{A}_{2},\mathcal{A}_{-}}(n)=f_{\mathcal{A}_{2},\mathcal{A}_{1}}(n+1),
$$
$$
f_{\mathcal{A}_{-},\mathcal{A}_{1}}(n)=f_{\mathcal{A}_{1},\mathcal{A}_{1}}(n+1)=0,\quad f_{\mathcal{A}_{-},\mathcal{A}_{2}}(n)=f_{\mathcal{A}_{1},\mathcal{A}_{2}}(n+1),
$$
and
$$
f_{\mathcal{A}_{-},\mathcal{A}_{-}}(n)=f_{\mathcal{A}_{1},\mathcal{A}_{1}}(n+2)=0
$$
since $a+a'\ge m+2>n+2$ for any $a, a'\in \mathcal{A}_{1}$,
from which we obtain
\begin{align}\label{eq-proof-4}
    \#\big\{(b_1, b_2)\in \mathcal{B}^2& :  n=b_1 +b_2\big\}
    \le f_{\mathcal{A}_{1},\mathcal{A}_{1}}(n)+f_{\mathcal{A}_{1},\mathcal{A}_{2}}(n)\nonumber\\
    &+f_{\mathcal{A}_{2},\mathcal{A}_{1}}(n)+f_{\mathcal{A}_{2},\mathcal{A}_{2}}(n)+f_{\mathcal{A}_{2},\mathcal{A}_{1}}
    (n+1)+f_{\mathcal{A}_{1},\mathcal{A}_{2}}
    (n+1).
\end{align}
Similarly, in the case $n=b_1+b_2-m$ we have
\begin{align}\label{eq-proof-5}
    \#\big\{(b_1,b_2&)\in \mathcal{B}^2: n +m= b_1+b_2\big\}\nonumber\\
    &\le f_{\mathcal{A}_{1},\mathcal{A}_{1}}(n+m)+f_{\mathcal{A}_{1},\mathcal{A}_{2}}(n+m)
    +f_{\mathcal{A}_{2},\mathcal{A}_{1}}(n+m)\nonumber\\
    &\quad \quad +f_{\mathcal{A}_{1},\mathcal{A}_{-}}(n+m)+f_{\mathcal{A}_{-},\mathcal{A}_{1}}(n+m)+f_{\mathcal{A}_{2},\mathcal{A}_{2}}(n+m)\nonumber\\
    &\quad \quad +f_{\mathcal{A}_{2},\mathcal{A}_{-}}(n+m)+f_{\mathcal{A}_{-},\mathcal{A}_{2}}(n+m)+f_{\mathcal{A}_{-},\mathcal{A}_{-}}(n+m)\nonumber\\
    &= f_{\mathcal{A}_{1},\mathcal{A}_{1}}(n+m)+f_{\mathcal{A}_{1},\mathcal{A}_{2}}(n+m)
    +f_{\mathcal{A}_{2},\mathcal{A}_{1}}(n+m)\nonumber\\
    &\quad \quad +2f_{\mathcal{A}_{1},\mathcal{A}_{1}}(n+m+1)+f_{\mathcal{A}_{2},\mathcal{A}_{2}}(n+m)\nonumber\\
    &\quad \quad +f_{\mathcal{A}_{2},\mathcal{A}_{1}}(n+m+1)+f_{\mathcal{A}_{1},\mathcal{A}_{2}}(n+m+1)+f_{\mathcal{A}_{1},\mathcal{A}_{1}}(n+2+m).
\end{align}
Hence, we conclude from (\ref{eq-proof-4}) and (\ref{eq-proof-5}),
\begin{align*}
    \sigma_{\mathcal{B}}(n)&\le f_{\mathcal{A},\mathcal{A}}(n)+2f_{\mathcal{A},\mathcal{A}}(n+1+m)+f_{\mathcal{A},\mathcal{A}}(n+1)+f_{\mathcal{A},\mathcal{A}}(n+2+m)+f_{\mathcal{A},\mathcal{A}}(n+m)\nonumber\\
    &\le2\#\big\{(a_1,a_2)\in \mathcal{A}^2: n\equiv a_1+a_2 \pmod{m+1}\big\}\nonumber\\
    &\quad \quad +\#\big\{(a_1,a_2)\in \mathcal{A}^2: n+1\equiv a_1+a_2\pmod{m+1}\big\}\nonumber\\
    &\quad \quad +\#\big\{(a_1,a_2)\in \mathcal{A}^2: n-1\equiv a_1+a_2\pmod{m+1}\big\}\nonumber\\
    &\le 4R_{m+1}.
\end{align*}
Therefore, we have $R_{m}\le 4 R_{m+1}$, completing the proof of our lemma.
 \end{proof}

Now, we turn to the proof of Theorem \ref{thm:1}.

\begin{proof}[Proof of Theorem \ref{thm:1}]

    We separate the argument into two cases.

    {\it Case I.} $R_m\le R_{m+1}$. In this case, by Lemma \ref{lem:1} we have
    \begin{align*}
        \big| R_m-R_{m+1}\big|=R_{m+1}-R_m\le R_{m+1}-\frac{1}{4}R_{m+1}=\frac{3}{4}R_{m+1}.
    \end{align*}
    Using the upper bound (\ref{eq-intro-1}), we have
    $$
    \big| R_m-R_{m+1}\big|\le \frac{3}{4}\cdot 192=144.
    $$

    {\it Case II.} $R_{m+1}\le R_{m}$. In this case, again by Lemma \ref{lem:1} we have
    \begin{align*}
        \big| R_m-R_{m+1}\big|=R_{m}-R_{m+1}\le R_{m}-\frac{1}{4}R_{m}=\frac{3}{4}R_{m}\le \frac{3}{4}\cdot 192=144,
    \end{align*}
    completing the proof of Theorem \ref{thm:1}.
\end{proof}

\section{Proof of Theorem \ref{thm:2}}
Following the argument of S\'andor and Yang \cite[Theorem 1.2]{SY17}, we will make use of the Lev-S\'ark\"ozy inequality.

\begin{proof}[Proof of Theorem \ref{thm:2}]
    Suppose that $|\mathcal{A}|=k$. Let $\alpha=k^2/m$. Then for any $c$,
    \begin{align}\label{eq-3-1}
        \sum_{n\in \mathbb{Z}_m}\big(\sigma_{\mathcal{A}}(n)-c\big)^2&=\sum_{n\in \mathbb{Z}_m}\big(\sigma_{\mathcal{A}}(n)-\alpha+\alpha-c\big)^2\nonumber\\
        &=\sum_{n\in \mathbb{Z}_m}\big(\sigma_{\mathcal{A}}(n)-\alpha\big)^2+2(\alpha-c)\sum_{n\in\mathbb{Z}_m}\big(\sigma_{\mathcal{A}}(n)-\alpha\big)+m(\alpha-c)^2\nonumber\\
        &=\sum_{n\in \mathbb{Z}_m}\big(\sigma_{\mathcal{A}}(n)-\alpha\big)^2+m(\alpha-c)^2
    \end{align}
    By Lemma \ref{lem-L-S} and (\ref{eq-3-1}) we have
    \begin{align}\label{eq-3-2}
     \sum_{n\in \mathbb{Z}_m}\big(\sigma_{\mathcal{A}}(n)-c\big)^2 \ge \frac{k^2(m-k)^2}{m(m-1)}+m\left(\frac{k^2}{m}-c\right)^2.
    \end{align}
    
    On the other hand, let $s=\floor{R_m/2}$ and $ c=s+1$. Since $R_m\le 192$, we have $s\le 96$; the error terms below are therefore uniform in $s$. Then we have 
    \begin{align*}
     \sum_{n\in \mathbb{Z}_m}\big(\sigma_{\mathcal{A}}(n)-c\big)^2& =\sum_{\substack{n\in \mathbb{Z}_m\\ 2| \sigma_{\mathcal{A}}(n)}}\big(\sigma_{\mathcal{A}}(n)-c\big)^2+\sum_{\substack{n\in \mathbb{Z}_m\\ 2\nmid  \sigma_{\mathcal{A}}(n)}}\big(\sigma_{\mathcal{A}}(n)-c\big)^2\\
     &\le (s-1)^2\sum_{\substack{n\in \mathbb{Z}_m\\ 2| \sigma_{\mathcal{A}}(n)}}1+s^2\sum_{\substack{n\in \mathbb{Z}_m\\ 2\nmid  \sigma_{\mathcal{A}}(n)}}1.
    \end{align*}
    Note that the number of odd values of $\sigma_{\mathcal{A}}(n)$ is at most $k$ since $|\mathcal{A}|=k$. Hence, we get
    \begin{align}\label{eq-3-3}
\sum_{n\in \mathbb{Z}_m}\big(\sigma_{\mathcal{A}}(n)-c\big)^2\le ks^2+(m-k)(s-1)^2=m(s-1)^2+k(2s-1).
    \end{align}
  Combining (\ref{eq-3-2}) with (\ref{eq-3-3}) we obtain  
  \begin{align*}
\frac{k^2(m-k)^2}{m(m-1)}+m\left(\frac{k^2}{m}-c\right)^2 \le m(s-1)^2+k(2s-1).
    \end{align*}
    Dividing both sides by $m$, we get
    \begin{align}\label{eq-new-1}
    \alpha\cdot\frac{(m-k)^2}{m(m-1)}+\left(\alpha-c\right)^2\le (s-1)^2+\frac{k}{m}(2s-1).
    \end{align}
Since 
$$
k^2=|\mathcal{A}|^2\le mR_m\le 192m, 
$$
we know $k\le \sqrt{192m}$, and hence 
$k/m\to 0$ as $m\to\infty$. 
     Then 
$$
\frac{(m-k)^2}{m(m-1)}=1+o(1).
$$    
From (\ref{eq-new-1}) and above inequalities we obtain  
    \begin{align}\label{eq-3-4}
\alpha+(\alpha-s-1)^2\le (s-1)^2+o(1).
    \end{align}
    Expanding (\ref{eq-3-4}) and rearranging it gives
    \begin{align*}
      \alpha^2-(2s+1)\alpha+4s\le o(1),  
    \end{align*}
from which we deduce 
\begin{align*}
   s+\frac{1}{2}-\sqrt{\left(s+\frac{1}{2}\right)^2-4s}-\varepsilon \le   \alpha  \le s+\frac{1}{2}+\sqrt{\left(s+\frac{1}{2}\right)^2-4s}+\varepsilon,
    \end{align*}
    where $\varepsilon$ is arbitrarily small providing that $m$ is sufficiently large (in terms of $\varepsilon$).
\end{proof}

\section{Related problems and results}
We conclude the paper by posing several unsolved problems involving Ruzsa's numbers. Recall that $\mathscr{A}_m$ denotes the set of all subsets $\mathcal{A}\subset [0,m-1]$ such that for any $n\in \mathbb{Z}_m$ we have
$$
1\le \sigma_{\mathcal{A}}(n)\le R_m.
$$

\begin{problem}\label{probl:1}
Let $m$ be a sufficiently large integer. Is it true that there are two subsets $\mathcal{A}_1, \mathcal{A}_2\in \mathscr{A}_m$ such that 
\(
|\mathcal{A}_1|\neq |\mathcal{A}_2|?
\)
\end{problem}

For $\mathcal A=\{a_1<a_2<\cdots<a_t\}\subset [0,m-1]$, put $a_{t+1}=a_1+m$ and define the largest cyclic gap of $\mathcal A$ by
\[
G(\mathcal A)=\max_{1\le i\le t}\{a_{i+1}-a_i\}.
\]

\begin{problem}\label{probl:2}
Is it true that 
\[
\lim_{m\to\infty}\sup_{\mathcal A\in\mathscr A_m}\frac{G(\mathcal A)}{m}=0?
\]
\end{problem}

For Problem \ref{probl:2}, we point out the following result.
\begin{proposition}\label{pro-prob-2}
We have
\[
\limsup_{m\to\infty}\sup_{\mathcal A\in\mathscr A_m}\frac{G(\mathcal A)}{m}\le \frac{1}{2}.
\] 
\end{proposition}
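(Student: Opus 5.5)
The plan is to show the deterministic bound $G(\mathcal A)\le (m+1)/2$ for every $\mathcal A\in\mathscr A_m$. This bound uses only that $\mathcal A$ is an additive basis of $\mathbb Z_m$, i.e.\ $\sigma_{\mathcal A}(n)\ge 1$ for all $n$; the upper bound $R_m$ plays no role. Dividing by $m$ and letting $m\to\infty$ then gives the proposition, since $(m+1)/(2m)\to 1/2$.

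\textbf{Step 1: translate so the gap sits at the top.} Fix $\mathcal A=\{a_1<\cdots<a_t\}\in\mathscr A_m$, and let $i$ be an index with $a_{i+1}-a_i=G(\mathcal A)=:G$, where $a_{t+1}=a_1+m$. Translate $\mathcal A$ by $-a_{i+1}$ modulo $m$ to get $\mathcal A'$. Translation preserves the basis property, because $\sigma_{\mathcal A'}(n)=\sigma_{\mathcal A}(n+2a_{i+1})$ and this is a bijection of $\mathbb Z_m$ onto itself. After translating, $\mathcal A'$ contains $0$ and $m-G$. Its remaining elements all lie between these two, since the residues $m-G+1,\dots,m-1$ correspond to the interior of the largest gap. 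So we may represent $\mathcal A'\subseteq[0,m-G]$ as a set of integers.

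\textbf{Step 2: count the possible sums.} Every integer sum $a+a'$ with $a,a'\in\mathcal A'$ lies in $[0,2(m-G)]$. This interval contains exactly $2(m-G)+1$ integers, so $\mathcal A'+\mathcal A'$ meets at most $2(m-G)+1$ residue classes modulo $m$. Since $\mathcal A'$ is a basis, all $m$ classes must be hit, which forces $2(m-G)+1\ge m$, that is, $G\le (m+1)/2$.

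\textbf{Step 3: conclude.} Since $G\le(m+1)/2$ for every $\mathcal A\in\mathscr A_m$, we get
\[
\sup_{\mathcal A\in\mathscr A_m}\frac{G(\mathcal A)}{m}\le\frac12+\frac1{2m},
\]
and taking $\limsup_{m\to\infty}$ gives the claim.

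The only point needing care is the bookkeeping in Step 1. One has to check that the cyclic gap definition, including the wrap-around gap $a_1+m-a_t$, really translates into the containment $\mathcal A'\subseteq[0,m-G]$. One also has to confirm that the translation stays within the class of bases, although membership in $\mathscr A_m$ itself (which requires representatives in $[0,m-1]$) is not needed for the counting argument.
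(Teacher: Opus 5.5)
Your proof is correct and is essentially the paper's argument. Both proofs translate so that the maximal gap sits at an end of $[0,m-1]$, and then observe that sums from a short arc cannot reach every residue.

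The difference is in execution. The paper places the gap at $(0,G)$ and argues by contradiction, exhibiting the unrepresented residues $0<n<(2\eta^*-1)m$. Your pigeonhole count on $\mathcal A'+\mathcal A'\subseteq[0,2(m-G)]$ instead gives directly the non-asymptotic bound $G(\mathcal A)\le (m+1)/2$ for every $m$, using only $\sigma_{\mathcal A}\ge 1$, which is slightly cleaner.
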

\begin{proof}
Assume the contrary, and suppose that 
\[
\limsup_{m\to\infty}\sup_{\mathcal A\in\mathscr A_m}\frac{G(\mathcal A)}{m}=\eta> \frac{1}{2}.
\]
Then there exist arbitrarily large $m$ and some $\mathcal A=\{a_1<a_2<\cdots<a_t\}\in\mathscr A_m$ such that
\[
G(\mathcal A)>\eta^*m,
\]
where
$$
\eta^*=\frac{\eta+1/2}{2}>\frac{1}{2}.
$$
Choose $j$ such that $a_{j+1}-a_j=G(\mathcal A)$, where $a_{t+1}=a_1+m$. Let 
$$
\mathcal{A}^*=\big\{b_i: b_i\equiv a_i-a_j\pmod{m},\ 0\le b_i\le m-1,\ 1\le i\le t\big\}.
$$
Clearly, $\mathcal{A}^*\in \mathscr{A}_m$. By the choice of the largest gap, $\mathcal{A}^*\cap(0,G(\mathcal A))=\emptyset$; since $G(\mathcal A)>\eta^*m$, it follows that $\mathcal{A}^*\cap(0,\eta^*m]=\emptyset$. We claim that
\[
\sigma_{\mathcal A^*}(n)=0
\]
for any integer $n$ satisfying
\[
0<n<\min\big\{(2\eta^*-1)m,\eta^*m\big\}.
\]
Indeed, suppose that $n\equiv x+y\pmod m$ with $x,y\in\mathcal A^*$. If one of $x$ and $y$ is $0$, then the other one would have to lie in $(0,\eta^*m)$, which is impossible. If both $x$ and $y$ are nonzero, then $x,y>\eta^*m$. Hence $x+y>2\eta^*m>m$, and after reduction modulo $m$ the corresponding residue is larger than $(2\eta^*-1)m$. This again is impossible. Thus no such $n$ is represented, contradicting $\mathcal A^*\in\mathscr A_m$. This proves the proposition.
\end{proof}

For even $m$ and $\mathcal A\in\mathscr A_m$, put
\[
p_m(\mathcal A)=\frac{\#\big\{a\in\mathcal{A}: 2\mid a\big\}}{|\mathcal{A}|}.
\]

\begin{problem}\label{probl:3}
Is it true that 
\[
\lim_{\substack{m\to\infty\\ 2\mid m}}\sup_{\mathcal A\in\mathscr A_m}\left|p_m(\mathcal A)-\frac12\right|=0?
\]
\end{problem}

For Problem \ref{probl:3}, we can prove the following result.

\begin{proposition}\label{pro-prob-3}
For every even $m$ and every $\mathcal A\in\mathscr A_m$, let
\[
E_{\mathcal A}=\big\{a\in\mathcal A:2\mid a\big\},\qquad
O_{\mathcal A}=\big\{a\in\mathcal A:2\nmid a\big\}.
\]
Then
\[
\frac m2\le |E_{\mathcal A}||O_{\mathcal A}|
\le \frac{\floor{R_m/2}}{2}m.
\]
Moreover, for any fixed $\varepsilon>0$, if $m$ is sufficiently large and even, then
\begin{align*}
\frac{1}{2}-\sqrt{\frac{1}{4}-\frac{1}{2(R_m^*+\varepsilon)}}
\le p_m(\mathcal A)
\le
\frac{1}{2}+\sqrt{\frac{1}{4}-\frac{1}{2(R_m^*+\varepsilon)}},
\end{align*}
where 
$$
R_m^*=\floor{\frac{R_m}{2}}+\frac{1}{2}+\sqrt{\floor{\frac{R_m}{2}}^2-3\floor{\frac{R_m}{2}}+\frac{1}{4}}.
$$
\end{proposition}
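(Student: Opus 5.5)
The plan is to look at the residues that are odd modulo $2$. Since $m$ is even, reduction modulo $m$ preserves parity, so $a+a'\pmod m$ is odd exactly when one summand lies in $E_{\mathcal A}$ and the other in $O_{\mathcal A}$. Hence, for every odd $n\in\mathbb Z_m$,
\[
\sigma_{\mathcal A}(n)=2\,\#\big\{(e,o)\in E_{\mathcal A}\times O_{\mathcal A}: e+o\equiv n \pmod m\big\}.
\]
Summing over the $m/2$ odd residues gives
\[
\sum_{2\nmid n}\sigma_{\mathcal A}(n)=2|E_{\mathcal A}||O_{\mathcal A}|.
\]

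For the two-sided bound on the product, we bound each odd-residue term.

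\emph{Lower bound.} Each odd $n$ has $\sigma_{\mathcal A}(n)\ge 1$. Since $\sigma_{\mathcal A}(n)$ is even for odd $n$, in fact $\sigma_{\mathcal A}(n)\ge 2$. So the sum is at least $m$, which gives $|E_{\mathcal A}||O_{\mathcal A}|\ge m/2$.

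\emph{Upper bound.} For odd $n$, $\sigma_{\mathcal A}(n)$ is an even integer at most $R_m$, hence at most $2\floor{R_m/2}$. So the sum is at most $\floor{R_m/2}\,m$, which gives $|E_{\mathcal A}||O_{\mathcal A}|\le \floor{R_m/2}m/2$.

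This parity observation is the key step, and it is straightforward.

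For the bounds on $p_m(\mathcal A)$, write $k=|\mathcal A|$ and $p=p_m(\mathcal A)$, so that $|E_{\mathcal A}||O_{\mathcal A}|=p(1-p)k^2$. The lower bound above gives
\[
p(1-p)\ge \frac{m}{2k^2}.
\]
The upper half of Theorem \ref{thm:2} gives $k^2\le (R_m^*+\varepsilon)m$ for $m$ large, where $R_m^*$ is exactly the displayed quantity. Combining,
\[
p(1-p)\ge \frac{1}{2(R_m^*+\varepsilon)}.
\]
Solving the quadratic inequality $p^2-p+\frac{1}{2(R_m^*+\varepsilon)}\le 0$ places $p$ between the roots
\[
\frac12\pm\sqrt{\frac14-\frac{1}{2(R_m^*+\varepsilon)}}.
\]

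The only point needing care is that the discriminant is nonnegative, i.e. $R_m^*+\varepsilon\ge 2$. This holds since $R_m\ge 6$ for large $m$ by Proposition \ref{proposition-Sandor-Yang}, so $\floor{R_m/2}\ge3$ and $R_m^*>3$. One also checks that the $\varepsilon$ coming from Theorem \ref{thm:2} can be taken to be the same fixed $\varepsilon$. No real obstacle is expected.
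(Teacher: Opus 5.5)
Your proposal is correct and follows essentially the same route as the paper. You use the parity count $\sum_{2\nmid n}\sigma_{\mathcal A}(n)=2|E_{\mathcal A}||O_{\mathcal A}|$, where each odd-residue count is even and so lies between $2$ and $2\floor{R_m/2}$, and then combine $p(1-p)|\mathcal A|^2\ge m/2$ with the upper bound of Theorem~\ref{thm:2}. Your extra discriminant check is harmless; it is in fact automatic, since $p(1-p)\le \frac14$ already forces $\frac{1}{2(R_m^*+\varepsilon)}\le\frac14$.
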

\begin{proof}
Since $m$ is even, every odd residue class can only be written as the sum of an element of $E_{\mathcal A}$ and an element of $O_{\mathcal A}$. Moreover, for any odd residue class, all representations occur in off-diagonal pairs. Since $\mathcal A\in\mathscr A_m$, each odd residue class has at least two representations: if $n=e+o$ with $e\in E_{\mathcal A}$ and $o\in O_{\mathcal A}$, then $(e,o)$ and $(o,e)$ are two distinct ordered representations. Hence
$$
2|E_{\mathcal{A}}||O_{\mathcal{A}}|= \sum_{\substack{n\in \mathbb{Z}_m\\ 2\nmid n}}\sigma_{\mathcal{A}}(n)
\ge 2\cdot \frac{m}{2}=m,
$$
which gives the lower bound
$$
|E_{\mathcal{A}}||O_{\mathcal{A}}|\ge \frac{m}{2}.
$$
On the other hand, each odd residue class has an even number of representations, and this number is at most $R_m$. Hence it is at most $2\floor{R_m/2}$. Therefore
$$
2|E_{\mathcal{A}}||O_{\mathcal{A}}|= \sum_{\substack{n\in \mathbb{Z}_m\\ 2\nmid n}}\sigma_{\mathcal{A}}(n)
\le 2\floor{\frac{R_m}{2}}\cdot \frac m2,
$$
which gives
$$
|E_{\mathcal{A}}||O_{\mathcal{A}}|\le \frac{\floor{R_m/2}}{2}m.
$$

Now suppose that $m$ is sufficiently large and even, and put $p=p_m(\mathcal A)=|E_{\mathcal A}|/|\mathcal A|$. Then $|O_{\mathcal A}|=(1-p)|\mathcal A|$. From the lower bound above we obtain
\begin{align}\label{problem-1}
    p(1-p)|\mathcal{A}|^2\ge \frac{m}{2}.
\end{align}
By Theorem \ref{thm:2} we have 
\begin{align}\label{problem-2}
|\mathcal{A}|^2\le \big(R_m^*+\varepsilon\big)m,
\end{align}
provided that $m$ is large enough in terms of $\varepsilon$. Combining (\ref{problem-1}) with (\ref{problem-2}) gives
$$
p(1-p)\ge \frac{1}{2(R_m^*+\varepsilon)}.
$$
Equivalently,
$$
p^2-p+\frac{1}{2(R_m^*+\varepsilon)}\le 0,
$$
from which the desired inequality follows.
\end{proof}

\begin{corollary}\label{cor-prob-3}
Suppose that $R_m=6$ for all sufficiently large $m$. Then
\[
\limsup_{\substack{m\to\infty\\2\mid m}}\sup_{\mathcal A\in\mathscr A_m}\left|p_m(\mathcal A)-\frac12\right|
\le \sqrt{\frac18}.
\]
\end{corollary}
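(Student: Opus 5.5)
The plan is to specialize Proposition \ref{pro-prob-3} to $R_m=6$ and let $\varepsilon\to 0$. Under the hypothesis, $R_m=6$ for all sufficiently large $m$, so $\floor{R_m/2}=3$. First I will compute the constant $R_m^*$ of Proposition \ref{pro-prob-3}:
\[
R_m^*=3+\frac12+\sqrt{9-9+\frac14}=3+\frac12+\frac12=4 .
\]
This is consistent with Corollary \ref{corollary-1}, since $(2+\varepsilon)^2m$ is essentially $4m$.

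Next, I fix $\varepsilon>0$. By Proposition \ref{pro-prob-3}, for every sufficiently large even $m$ and every $\mathcal A\in\mathscr A_m$,
\[
\left|p_m(\mathcal A)-\frac12\right|\le \sqrt{\frac14-\frac{1}{2(4+\varepsilon)}} .
\]
This bound is uniform in $\mathcal A$, because the threshold for $m$ depends only on $\varepsilon$ through Theorem \ref{thm:2}. Therefore it survives taking $\sup_{\mathcal A\in\mathscr A_m}$ and then $\limsup$ over even $m\to\infty$.

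Finally, I let $\varepsilon\to0^+$. The right-hand side is continuous in $\varepsilon$ and tends to
\[
\sqrt{\frac14-\frac18}=\sqrt{\frac18},
\]
which gives the claimed inequality.

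The only point needing care is uniformity. Theorem \ref{thm:2}, and hence Proposition \ref{pro-prob-3}, holds for all $\mathcal A\in\mathscr A_m$ once $m\ge m_0(\varepsilon)$, where the proof of Theorem \ref{thm:2} uses only $k\le\sqrt{192m}$. So the supremum over $\mathcal A$ causes no problem. I do not expect any real obstacle beyond this bookkeeping.
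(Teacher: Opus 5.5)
Your proposal is correct and matches the paper's proof: you compute $R_m^*=4$ when $R_m=6$, apply Proposition~\ref{pro-prob-3}, and let $\varepsilon\to0$. Your point about uniformity in $\mathcal A$ is also sound. The $o(1)$ terms in Theorem~\ref{thm:2} depend only on $m$, via $k\le\sqrt{192m}$, so the bound survives taking the supremum over $\mathcal A$; the paper leaves this implicit.
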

\begin{proof}
If $R_m=6$, then
\[
R_m^*=3+\frac12+\sqrt{9-9+\frac14}=4.
\]
The result follows from Proposition \ref{pro-prob-3} by letting $\varepsilon\to0$.
\end{proof}

Suppose $\mathcal{A}\in \mathscr{A}_m$. Let $L_{\mathcal{A}}$ denote the largest integer $\ell$ for which there exist $x,d\in\mathbb Z_m$ such that
\[
x,\ x+d,\ \ldots,\ x+(\ell-1)d
\]
are distinct elements of $\mathcal A$. Let
$$
L_m=\max_{\mathcal{A}\in \mathscr{A}_m}L_{\mathcal{A}}.
$$
Assuming Conjecture \ref{value}, we have
$L_m\le 6$ for $m\ge 40$. Generally, $L_m\le R_m$ for any $m$.
In fact, if 
$
x,\ x+d,\ \cdots,\ x+(\ell-1)d \in \mathcal{A}
$
are distinct, then the $\ell$ ordered pairs
\[
\big(x+id,\ x+(\ell-1-i)d\big),\qquad 0\le i\le \ell-1,
\]
all give representations of the same residue class $2x+(\ell-1)d$ modulo $m$. Thus
$$
\sigma_{\mathcal{A}}\big(2x+(\ell-1)d\big)\ge \ell,
$$
which means $\ell\le R_m$.
Naturally, determining the values of $L_m$ would be another open question. It may also be useful to record $L_{\mathcal A}$ for the certificates in the table above. This leads to the following natural problem.
\begin{problem}\label{probl:4}
Is it true that $L_m=R_m$ for infinitely many $m$?
\end{problem}

\begin{problem}\label{probl:5}
    Does $\log |\mathscr{A}_m|$ admit an asymptotic formula? 
    Is it true that
    $$
    |\mathscr{A}_m|\le |\mathscr{A}_{m+1}|
    $$ for all sufficiently large $m$? 
\end{problem}

As a partial result toward the first part of Problem \ref{probl:5}, we prove the following.
\begin{proposition}\label{pro-prob-new-1}
For all sufficiently large integers $m$, we have
\[
    \frac12\log m-\frac12\log 191
    \le
    \log |\mathscr A_m|
    \le
    \sqrt{191m}\log\!\left(e\sqrt{\frac m{191}}\right)+O(\log m).
\]
In particular, we have
\(
    |\mathscr{A}_m|< |\mathscr{A}_{m+1}|
\)
infinitely often.
\end{proposition}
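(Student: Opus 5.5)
Both bounds come from Corollary \ref{corollary-new} together with translation invariance. The "infinitely often" statement then follows by comparing the lower and upper bounds along a sparse sequence.

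\emph{Lower bound.} Fix some $\mathcal A\in\mathscr A_m$; it exists by the definition of $R_m$. For every $t\in\mathbb Z_m$, the reduced translate $\mathcal A+t$ (taken in $[0,m-1]$) satisfies $\sigma_{\mathcal A+t}(n)=\sigma_{\mathcal A}(n-2t)$, so it also lies in $\mathscr A_m$. The translates $\mathcal A+t$ and $\mathcal A+t'$ coincide exactly when $t-t'$ lies in the stabilizer $H=\{h:\mathcal A+h=\mathcal A\}$. Since $\mathcal A$ is a union of cosets of $H$, we have $|H|\le|\mathcal A|$. Hence the orbit of $\mathcal A$ has size
\[
\frac{m}{|H|}\ge \frac{m}{|\mathcal A|}\ge \frac{m}{\sqrt{191m}}=\sqrt{\frac{m}{191}},
\]
where the last step uses Corollary \ref{corollary-new} for large $m$. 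Taking logarithms gives $\log|\mathscr A_m|\ge \frac12\log m-\frac12\log191$.

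\emph{Upper bound.} By Corollary \ref{corollary-new}, every member of $\mathscr A_m$ is a subset of $[0,m-1]$ of size at most $K=\lfloor\sqrt{191m}\rfloor$. Therefore
\[
|\mathscr A_m|\le\sum_{k\le K}\binom mk\le (K+1)\binom mK,
\]
using $K<m/2$. Apply the estimate $\binom mK\le (em/K)^K$. The function $x\mapsto x\log(em/x)$ is increasing for $x<m$, so we may replace $K$ by $\sqrt{191m}$. This gives
\[
K\log\frac{em}{K}\le \sqrt{191m}\,\log\!\Big(e\sqrt{\tfrac{m}{191}}\Big),
\]
and the factor $K+1$ contributes $O(\log m)$. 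This part is routine.

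\emph{Infinitely often.} Suppose instead that $|\mathscr A_m|\ge|\mathscr A_{m+1}|$ for all $m\ge m_0$. Then $|\mathscr A_m|$ is eventually non-increasing, hence bounded. This contradicts the lower bound $\log|\mathscr A_m|\ge\frac12\log m-O(1)\to\infty$. Only the lower bound is actually needed here.

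The step needing the most care is the orbit-size estimate, specifically the bound $|H|\le|\mathcal A|$. It holds because $\mathcal A$ is nonempty: $\mathcal A$ must represent $0$, so it has at least one element. Note that this is a bound on the size of the subgroup $H$, not on its index. Everything else is routine.
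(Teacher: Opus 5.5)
Your proposal is correct and follows essentially the same route as the paper. The lower bound uses translation orbits, the stabilizer bound $|\operatorname{Stab}(\mathcal A)|\le|\mathcal A|$ and Corollary~\ref{corollary-new}; the upper bound is a binomial-sum count with $\binom{m}{K}\le(em/K)^K$; and the final claim uses the fact that an eventually non-increasing sequence is bounded. The only cosmetic difference is that you take $K=\lfloor\sqrt{191m}\rfloor$, use the monotonicity of $x\log(em/x)$, and pay a harmless factor $K+1$, whereas the paper takes $K=\lceil\sqrt{191m}\rceil$ and bounds the whole sum by $(em/K)^K$ directly.
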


\begin{proof}
Let $\mathcal{A}\in\mathscr A_m$, and write $k=|\mathcal{A}|$.
By Corollary \ref{corollary-new} we have
$$
k\le \sqrt{191m}
$$
for all sufficiently large $m$. Therefore,
\[
    \big|\mathscr A_m\big|
    \le
    \sum_{j\le \sqrt{191m}}\binom mj.
\]
Let $K=\lceil\sqrt{191m}\rceil$. Using the standard estimate
\[
    \sum_{j\le K}\binom mj
    \le
    \left(\frac{em}{K}\right)^K
    \quad (K\le m/2),
\]
we obtain
\[
    \log |\mathscr A_m|
    \le
    K\log\!\left(\frac{em}{K}\right)
    \le
    \sqrt{191m}\log\!\left(e\sqrt{\frac m{191}}\right)+O(\log m).
\]

It remains to give a lower bound. Take any $\mathcal{A}\in\mathscr{A}_m$.
For every $h\in\mathbb Z_m$, the translate
\[
    \mathcal{A}+h=\big\{a+h:a\in \mathcal{A}\big\}
\]
also belongs to $\mathscr A_m$. Let
\[
    \operatorname{Stab}(\mathcal{A})=\{h\in\mathbb Z_m:\mathcal{A}+h=\mathcal{A}\}.
\]
The number of distinct translates of $\mathcal{A}$ is
\[
    \frac{m}{|\operatorname{Stab}(\mathcal{A})|}.
\]
Moreover, for any fixed $a\in \mathcal{A}$ we have
\[
    a+\operatorname{Stab}(\mathcal{A})\subset \mathcal{A},
\]
and hence
\[
    |\operatorname{Stab}(\mathcal{A})|\le |\mathcal{A}|=k.
\]
Consequently,
\[
    |\mathscr A_m|
    \ge
    \frac m k.
\]
Using $k\le \sqrt{191m}$, we get
\[
    |\mathscr A_m|
    \ge
    \sqrt{\frac m{191}}.
\]
Taking logarithms gives
\[
    \log |\mathscr A_m|
    \ge
    \frac12\log m-\frac12\log 191.
\]
Finally, the lower bound shows that $|\mathscr A_m|$ is unbounded. If
\[
|\mathscr A_m|<|\mathscr A_{m+1}|
\]
held only finitely often, then the sequence $|\mathscr A_m|$ would be eventually non-increasing, and hence bounded, a contradiction. This proves the proposition.
\end{proof}

\begin{remark}\label{rem-affine}
The same lower-bound argument can also be applied to affine images. Indeed, for every $u\in(\mathbb Z_m)^\times$ and $h\in\mathbb Z_m$, the set
\[
u\mathcal A+h=\{ua+h:a\in\mathcal A\}
\]
also belongs to $\mathscr A_m$. If
\[
\operatorname{AffStab}(\mathcal A)=\{(u,h)\in(\mathbb Z_m)^\times\times\mathbb Z_m:u\mathcal A+h=\mathcal A\},
\]
then the affine orbit of $\mathcal A$ gives
\[
|\mathscr A_m|\ge \frac{m\varphi(m)}{|\operatorname{AffStab}(\mathcal A)|}.
\]
In particular, if there are a constant $C>0$ and sets $\mathcal A_m\in\mathscr A_m$ such that
\[
|\operatorname{AffStab}(\mathcal A_m)|\le C|\mathcal A_m|^2
\]
for infinitely many $m$, then Corollary \ref{corollary-new} gives, along those $m$,
\[
|\mathscr A_m|\ge \frac{\varphi(m)}{191C}.
\]
This would substantially improve the general lower bound in Proposition \ref{pro-prob-new-1}.
\end{remark}

The following problem is in the same flavor as Problem \ref{probl:5}.

\begin{problem}\label{probl:6}
     Is it true that for infinitely many $m$ we have
    $$
    \min_{\mathcal{A}\in \mathscr{A}_m}|\mathcal{A}|> \min_{\mathcal{A}\in \mathscr{A}_{m+1}}|\mathcal{A}|?
    $$ 
\end{problem}

 \begin{problem}\label{probl:7}
    Let $m$ be sufficiently large and $\mathcal{A}\in \mathscr{A}_m$. Is it true that for any $2\le v\le R_m$ there exists some $n\in \mathbb{Z}_m$ such that $\sigma_{\mathcal{A}}(n)=v$?
\end{problem}

\begin{remark}
The endpoint $v=R_m$ in Problem \ref{probl:7} is automatic. Indeed, if $\sigma_{\mathcal A}(n)<R_m$ for every $n\in\mathbb Z_m$, then $\mathcal A$ would satisfy
\[
1\le \sigma_{\mathcal A}(n)\le R_m-1\qquad(n\in\mathbb Z_m),
\]
contradicting the definition of $R_m$.
\end{remark}

\begin{proposition}\label{pro-value-2}
Let $\mathcal A\in\mathscr A_m$. If no residue class has exactly two representations, then
\[
|\mathcal A|^2\ge 4m-3|\mathcal A|.
\]
Consequently, for any sequence $m_j\to\infty$ and $\mathcal A_j\in\mathscr A_{m_j}$, if
\[
\limsup_{j\to\infty}\frac{|\mathcal A_j|^2}{m_j}<4,
\]
then $2$ is attained by $\sigma_{\mathcal A_j}$ for all sufficiently large $j$.
\end{proposition}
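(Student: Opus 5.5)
Write $k=|\mathcal A|$ and, for $i\ge 1$, let $k_i=\#\{n\in\mathbb Z_m:\sigma_{\mathcal A}(n)=i\}$. The plan is a counting argument that combines the identity \eqref{eq-sec-1-1}, namely $k^2=\sum_n\sigma_{\mathcal A}(n)=\sum_i ik_i$, with the parity observation already used in the proof of Proposition \ref{proposition-Sandor-Yang}. That observation says $\sigma_{\mathcal A}(n)$ is odd only if $n=2a$ for some $a\in\mathcal A$, so the number of residue classes with odd representation count is at most $k$.

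\textbf{Step 1: pointwise lower bound.} Since $\mathcal A\in\mathscr A_m$, every residue class satisfies $\sigma_{\mathcal A}(n)\ge 1$. By hypothesis $\sigma_{\mathcal A}(n)\ne 2$. Hence each $n$ has $\sigma_{\mathcal A}(n)=1$, or $\sigma_{\mathcal A}(n)=3$, or $\sigma_{\mathcal A}(n)\ge 4$. I will bound each type from below using the weight $4-3\cdot\mathbf 1[\sigma_{\mathcal A}(n)\text{ odd}]$:
\begin{itemize}
\item If $\sigma_{\mathcal A}(n)=1$, it is odd, and $1=4-3$.
\item If $\sigma_{\mathcal A}(n)=3$, it is odd, and $3\ge 4-3$.
\item If $\sigma_{\mathcal A}(n)\ge 4$, then $\sigma_{\mathcal A}(n)\ge 4$ regardless of parity, so the bound holds for even values and trivially for odd ones.
\end{itemize}
So in every case
$$\sigma_{\mathcal A}(n)\ge 4-3\cdot\mathbf 1[\sigma_{\mathcal A}(n)\text{ odd}].$$

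\textbf{Step 2: sum over $\mathbb Z_m$.} Summing the pointwise bound and using the parity count gives
$$k^2=\sum_n\sigma_{\mathcal A}(n)\ge 4m-3\,\#\{n:\sigma_{\mathcal A}(n)\text{ odd}\}\ge 4m-3k,$$
which is the first claim.

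\textbf{Step 3: the consequence.} Suppose $\limsup_j|\mathcal A_j|^2/m_j<4$. Also $|\mathcal A_j|\le\sqrt{192m_j}$, because $|\mathcal A_j|^2\le m_jR_{m_j}\le 192m_j$ by \eqref{eq-intro-1}. Therefore $|\mathcal A_j|=o(m_j)$. Assume, for infinitely many $j$, that $\sigma_{\mathcal A_j}$ does not attain $2$. Then the first part gives $|\mathcal A_j|^2/m_j\ge 4-3|\mathcal A_j|/m_j\to 4$ along those $j$, which contradicts the $\limsup$ hypothesis. Hence $2$ is attained by $\sigma_{\mathcal A_j}$ for all sufficiently large $j$.

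\textbf{Main obstacle.} The only delicate point is Step 1, specifically choosing the right pointwise weight. Odd values can be as small as $1$, but they are limited to at most $k$ classes. Even values, once $2$ is excluded, are at least $4$. The weight $4-3\cdot\mathbf 1[\text{odd}]$ is exactly what makes these two facts combine. Beyond that, everything follows directly from \eqref{eq-sec-1-1} and the parity remark.
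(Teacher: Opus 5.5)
Your proof is correct and is essentially the paper's argument. The pointwise weight $4-3\cdot\mathbf 1[\sigma_{\mathcal A}(n)\text{ odd}]$ just repackages the paper's split into the set $\Omega$ of odd-count classes (at most $|\mathcal A|$ of them, each with at least one representation) and its complement (each with at least $4$ representations), giving $|\mathcal A|^2\ge 4m-3|\Omega|\ge 4m-3|\mathcal A|$. For the consequence you use the trivial bound $|\mathcal A_j|^2\le 192m_j$ where the paper cites Corollary \ref{corollary-new}; either bound, or even the $\limsup$ hypothesis itself, gives $|\mathcal A_j|/m_j\to 0$, which is all that is needed.
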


\begin{proof}
Let
\[
\Omega=\{n\in\mathbb Z_m:\sigma_{\mathcal A}(n)\ \text{is odd}\}.
\]
The number of residues in $\Omega$ is at most $|\mathcal A|$, since an odd value of $\sigma_{\mathcal A}(n)$ can occur only if $n=2a$ for some $a\in\mathcal A$. If no residue class has exactly two representations, then every residue outside $\Omega$ has an even positive number of representations different from $2$, and hence has at least $4$ representations. Every residue in $\Omega$ has at least one representation. Therefore
\[
|\mathcal A|^2=\sum_{n\in\mathbb Z_m}\sigma_{\mathcal A}(n)
\ge 4(m-|\Omega|)+|\Omega|=4m-3|\Omega|
\ge 4m-3|\mathcal A|.
\]
The final assertion follows from this inequality and Corollary \ref{corollary-new}, since $|\mathcal A_j|/m_j\to0$.
\end{proof}

	\bibliographystyle{plain}

\end{document}